\documentclass[10pt]{article}
\usepackage[usenames]{color}
\usepackage{amssymb}
\usepackage{amsfonts,amsmath}
\usepackage{multicol}
\usepackage{graphicx}
\usepackage{amsthm}
\usepackage{verbatim}
\usepackage{float}
\usepackage{forest,adjustbox}
\usepackage{color}
\usepackage{mathrsfs}
\usepackage{amsmath,url}
\usetikzlibrary{arrows,decorations.pathmorphing,decorations.markings,backgrounds,positioning,fit,petri,calc}
\usepackage{tikz}
\usetikzlibrary{arrows.meta}
\usepackage{amsmath, epsfig}

\theoremstyle{plain}
\newtheorem{thm}{Theorem}
\newtheorem{lemma}[thm]{Lemma}

\newtheorem{definition}[thm]{Definition}
\newtheorem{cor}[thm]{Corollary}
\newtheorem{prop}[thm]{Proposition}

\newtheorem{remark}[thm]{Remark}

\newtheorem{conjecture}[thm]{Conjecture}
\newtheorem{claim}{Claim}
\newtheorem{observation}{Observation}
\newtheorem*{ques*}{Question}
\usetikzlibrary{graphs}
\newcommand{\AZ}[1]{{#1}}
\title{On Seymour's and Sullivan's Second Neighbourhood Conjectures}

\author{
	Jiangdong Ai\thanks{School of Mathematical Sciences and LPMC, Nankai University. {\tt jd@nankai.edu.cn}.} \and Stefanie Gerke\thanks{Department of Mathematics. Royal Holloway University of London.   {\tt stefanie.gerke@rhul.ac.uk}.} \and Gregory Gutin \thanks{Department of Computer Science. Royal Holloway University of London. {\tt g.gutin@rhul.ac.uk}.} \and Shujing Wang \thanks {School of Mathematics and Statistics, Central China Normal University. {\tt wang06021@126.com}.} \and Anders Yeo \thanks {Department of Mathematics and Computer Science, University of Southern Denmark. {\tt andersyeo@gmail.com}.} \and Yacong Zhou\thanks{Department of Computer Science. Royal Holloway University of London. {\tt Yacong.Zhou.2021@live.rhul.ac.uk}.} }
\date{}

\begin{document}
	
	\maketitle
	
	\begin{abstract}
For a vertex $x$ of a digraph, $d^+(x)$ ($d^-(x)$, resp.) is the number of vertices at distance 1 from (to, resp.) $x$ and $d^{++}(x)$ is the number of vertices at distance 2 from $x$. 
In 1995, Seymour conjectured that for any oriented graph $D$ there exists a vertex $x$ such that $d^+(x)\leq d^{++}(x)$. In 2006, Sullivan conjectured that there exists a vertex $x$ in $D$
such that $d^-(x)\leq d^{++}(x)$. We give a sufficient condition in terms of the number of transitive triangles for an oriented graph to satisfy Sullivan's conjecture.  In particular, this implies that Sullivan's conjecture
holds for all orientations of planar graphs and of triangle-free graphs. An oriented graph $D$ is an oriented split  graph if the vertices of $D$ can be partitioned into vertex sets $X$ and $Y$ such that $X$ is an independent set and $Y$ induces a tournament.
We also show that the two conjectures hold for some families of oriented split graphs, in particular, when $Y$ induces a regular or an almost regular tournament.
		\end{abstract}
		
		\AZ{
{\bf Keywords:} Seymour's second neighbourhood conjecture; Sullivan's second neighbourhood conjecture; oriented split graphs; planar digraphs.
}
		\section{Introduction}
	A directed graph $D=(V,A)$ is an {\em oriented graph} if $xy\in A$ implies that $yx\not\in A$; we also say that $D$ is an {\em orientation}
	of the {\em underlying} graph $G=(V,E)$, where $xy\in E$ if either $xy\in A$ or $yx\in A.$ 
	
	For a vertex $u$ of a directed graph $D=(V,A)$, let $N_D^-(u)=\{v\in V: vu\in A\}$ and $N_D^+(u)=\{v\in V: uv\in A\}.$
	Also, $N_D^{++}(u)=\{v\in V: uw,wv\in A \mbox{ for some } w\in A\}\setminus N_D^+(u).$ In what follows 
	we will often omit the subscript $D$ in $N_D^-(u)$, $N_D^+(u)$, etc. when $D$ is known from the context. 
	
	In 1978, Caccetta and H\"aggkvist \cite{CH} introduced the following famous conjecture.
		\begin{conjecture}\label{CHC}
			For any integer $r>0$, every digraph with $n$ vertices and the minimum out-degree at least $\frac{n}{r}$ has a cycle with length at most $r$.
		\end{conjecture}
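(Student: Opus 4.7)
The plan here is largely aspirational: this is the Caccetta--H\"aggkvist conjecture from 1978, which remains open in its full generality and is widely viewed as one of the hardest problems on directed graphs. I will sketch the natural lines of attack and the points where they break down, rather than claim a proof.

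The first reduction is to focus on $r=3$, since the cases $r=1,2$ are straightforward (loops and $2$-cycles) and essentially all known partial progress targets the statement that every oriented graph on $n$ vertices with minimum out-degree at least $n/3$ contains a directed triangle. For this case I would begin with a direct counting argument: if the minimum out-degree is $\alpha n$, then the number of directed paths of length $2$ is at least of order $\alpha^2 n^3$, and one tries to force many of them to close into $3$-cycles rather than all lying in transitive triples. This line, developed by Shen and others, yields $\alpha \leq (3-\sqrt{5})/2 \approx 0.3820$; pushing further seems to require a much finer understanding of the local structure around each vertex, in particular the interaction between $N^+(x)$, $N^-(x)$ and $N^{++}(x)$ for many vertices $x$ simultaneously.

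The heavier machinery I would then invoke is flag algebras, following Hladk\'y, Kr\'a\v{l} and Norin, who obtained $\alpha \leq 0.3465$: one encodes densities of small sub-digraphs of a putative triangle-free counterexample and solves a semidefinite program over them, hoping that a well-chosen basis of flags closes the remaining gap of roughly $0.013\,n$ down to $n/3$. In parallel I would try a spectral route, exploiting constraints on the adjacency spectra of triangle-free oriented graphs, and an entropy-compression approach in the spirit of recent improvements for related Tur\'an-type problems.

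The main obstacle, and the reason the conjecture has resisted solution for nearly fifty years, is the abundance of near-extremal constructions---blow-ups of the directed $5$-cycle, Kautz-type digraphs, and their variations---which are triangle-free and have minimum out-degree arbitrarily close to $n/3$. Any successful argument must be tight enough to exclude all of these while still admitting the exact extremal examples, and every current technique leaves a stubborn constant-factor gap. I would therefore not expect to settle the conjecture here; a realistic target is to shave the leading constant in the Hladk\'y--Kr\'a\v{l}--Norin bound, and any genuine closure of the problem seems to require a fundamentally new invariant of triangle-free oriented graphs that is not captured by local density or spectral data alone.
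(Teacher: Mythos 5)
This statement is Conjecture~\ref{CHC}, the Caccetta--H\"aggkvist conjecture, which the paper states as an open problem and does not prove (nor does it claim to); the paper only uses it as motivation for Seymour's and Sullivan's conjectures. You correctly identify it as open and do not claim a proof, so there is no gap to point out in the usual sense --- your submission is a survey of partial progress rather than a proof, and that is the honest and appropriate response here. Your summary of the known bounds (the $(3-\sqrt{5})/2$ counting bound and the flag-algebra improvements for the $r=3$ case) and of the near-extremal constructions is accurate and consistent with the literature the paper cites. Nothing further is required, since no proof exists to compare against.
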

		It is trivial when $r= 2$. For $r\geq 3$, this conjecture remains open and for $r= 3 $ it is one of the most well-known problems in graph theory. Seymour (see, e.g., \cite{DL}) proposed the following 
		conjecture which would imply the special case of Conjecture \ref{CHC} when the minimum in- and out-degree are both at least $n/3$.
	\begin{conjecture}\label{SC}
		Every oriented graph has a vertex $u$ (called a {\em Seymour vertex}, in what follows) such that
		$|N^{++}(u)| \geq |N^+(u)|$. 
	\end{conjecture}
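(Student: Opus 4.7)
The plan is to attack Conjecture \ref{SC} by combining a minimum-counterexample setup with a median-order argument, and to acknowledge upfront that this is a famously open statement, so any concrete plan must be viewed as a roadmap rather than a guaranteed route. I would begin by assuming that $D=(V,A)$ is a vertex-minimum counterexample: every vertex $u$ satisfies $|N^{++}(u)|<|N^{+}(u)|$. Summing this strict inequality over $u\in V$ gives $\sum_u|N^{++}(u)|<\sum_u|N^{+}(u)|=|A|$. Since $\sum_u|N^{++}(u)|$ is closely related to the number of directed paths of length two minus corrections for transitive triangles and digons (the latter absent here, as $D$ is oriented), the global inequality should translate into a lower bound on the number of transitive triangles in $D$, tying the argument to the structural tool used elsewhere in the paper.

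The second step is to mimic the Chv\'atal--Lov\'asz \emph{median order} approach of Havet and Thomass\'e that settled the tournament case. One orders the vertices $v_1,\dots,v_n$ so as to maximise the number of forward arcs $v_iv_j$ with $i<j$, and one attempts to show that the last vertex $v_n$ (or a suitably chosen late vertex) is a Seymour vertex. In a tournament this works because every $v_i$ is comparable with $v_n$, so a local swap argument charges each in-neighbour of $v_n$ either to an arc in $N^{+}(v_n)N^{+}(v_n)$ or to a new member of $N^{++}(v_n)$. To extend this to an arbitrary oriented graph one must deal with non-adjacent pairs: I would assign each non-edge a fractional weight, redefine the median order as the maximiser of a weighted objective, and try to show that the swap argument still pays for each missing comparison.

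If the median-order step stalls, a complementary plan is a Fisher-style probabilistic argument. One seeks a probability distribution $p$ on $V$ such that $\mathbb{E}_{X\sim p}\bigl[|N^{++}(X)|-|N^{+}(X)|\bigr]\geq 0$; the existence of such $p$ would contradict the assumption that no Seymour vertex exists. Constructing $p$ amounts to proving feasibility of a linear program, which by LP duality is equivalent to establishing a weighted Seymour-type inequality for every non-negative weighting of vertices. Here I would try to exploit the transitive-triangle count obtained in the first step, hoping that a large triangle count forces a vertex with sufficiently many descendants at distance two.

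The main obstacle is of course honesty about the status of the problem: the whole conjecture has resisted attack for three decades, and both the median-order and probabilistic plans above break down precisely where non-adjacencies or ``far'' second neighbourhoods accumulate without producing transitive triangles. My realistic target would therefore be to push these ideas through under the extra hypotheses used later in the paper---bounded number of transitive triangles, planarity, triangle-freeness, or an oriented-split-graph structure---and to view the unconditional conjecture as the end goal rather than the immediate deliverable.
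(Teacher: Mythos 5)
The statement you are addressing is Conjecture~\ref{SC}, Seymour's Second Neighbourhood Conjecture, which the paper does not prove and which remains open; the paper only states it and later verifies it for restricted classes (planar oriented graphs via the transitive-triangle bound, and certain oriented split graphs). Your submission is, by your own admission, a roadmap rather than a proof, so there is no argument here to check for correctness: assuming a minimum counterexample, summing $|N^{++}(u)|<|N^{+}(u)|$ over all vertices, invoking median orders, and appealing to LP duality are all legitimate openings, but none of them is carried to a contradiction, and you correctly flag the exact points where each one stalls. In that sense the ``gap'' is total --- the proposal proves nothing about the general conjecture --- but this is unavoidable given the status of the problem, and you deserve credit for not pretending otherwise.

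Two concrete remarks on your plan. First, your opening computation is essentially the paper's Theorem~\ref{thm:tt} transposed from Sullivan's to Seymour's setting: counting $\sum_{v\in N^+(u)}d^+(v)$ as transitive triangles plus arcs into the second out-neighbourhood, and summing over $u$, yields $\mathrm{tt}(D)+\sum_u w_u=\sum_u d^-(u)d^+(u)$. For Sullivan the bound $w_u\le d^+(u)(d^-(u)-1)$ closes the loop and gives $\mathrm{tt}(D)\ge|A|$; for Seymour the natural analogue bounds $w_u$ by $d^+(u)(d^+(u)-1)$, and the resulting inequality involves $\sum_u d^+(u)^2$ versus $\sum_u d^-(u)d^+(u)$, which do not compare in general --- so this route gives a sufficient condition (as the paper obtains for Sullivan) but not the conjecture. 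Second, the weighted median-order idea for non-tournaments is precisely where Fidler--Yuster and subsequent authors got stuck: the swap argument charges each in-neighbour of the last vertex to a forward arc or a second-neighbour, and non-adjacent pairs contribute neither, so the fractional reweighting must recover a full unit of charge from each non-edge, which no known weighting achieves. Your realistic target --- pushing these tools through under extra hypotheses such as bounded $\mathrm{tt}(D)$, planarity, or split structure --- is exactly what the paper does, so the proposal is well aligned with the literature even though it cannot and does not settle the conjecture itself.
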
 
	Seymour's conjecture turned out to be very difficult and was confirmed only for tournaments and other
	restricted classes of digraphs, see, e.g., \cite{Fisher, FY, G1, HT}. Note that Seymour's conjecture cannot be extended to all directed graphs
	as every complete directed graph with at least two vertices (a digraph obtained from a complete graph by replacing every 
	edge $xy$ by arcs $xy,yx$) does not have a Seymour vertex. 
	
	Sullivan \cite{Sullivan} proposed the following variation of  Seymour's
	conjecture. 
	\begin{conjecture}\label{SLC}
		Every oriented graph has a vertex $u$ (called a {\em Sullivan vertex}, in what follows) such that
		$|N^{++}(u)| \geq |N^-(u)|$. 
	\end{conjecture}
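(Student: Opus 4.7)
The plan is to use an averaging argument: since $\sum_{v} |N^+(v)| = \sum_{v} |N^-(v)| = |A|$, it suffices to prove the global inequality $\sum_{v} |N^{++}(v)| \ge |A|$, for then some vertex $u$ automatically satisfies $|N^{++}(u)| \ge |N^-(u)|$. So I would reduce the existential Sullivan statement to bounding the total second-out-neighbourhood from below by the arc count.

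The natural handle on $\sum_v |N^{++}(v)|$ is via directed $2$-paths. The number of $2$-paths equals $\sum_y d^-(y)\, d^+(y)$, and a $2$-path $v\to y\to z$ contributes to $|N^{++}(v)|$ unless $z\in N^+(v)$ or $z=v$; in an oriented graph the latter cannot occur and the former happens precisely when $\{v,y,z\}$ spans a transitive triangle. This motivates the heuristic identity ``second-out-neighbours $\approx$ $2$-paths $-$ transitive triangles'', which lines up with the paper's emphasis on the transitive-triangle count in the abstract. Under a suitable upper bound on the number of transitive triangles—an assumption one expects orientations of planar or triangle-free graphs to satisfy almost for free—one can then hope to push $\sum_v |N^{++}(v)|$ past $|A|$.

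The main obstacle is that the obvious map from pairs $(v,z)$ with $z\in N^{++}(v)$ into the set of good $2$-paths is injective but far from surjective: a single pair may be witnessed by many distinct $2$-paths, so the naive counting gives $\sum_v |N^{++}(v)|\le $ (good $2$-paths), which is the wrong direction. Reversing this inequality requires a delicate bound on multiplicities—essentially, controlling how often a given ordered pair is connected by more than one $2$-path—and this is the technical heart where the sharp transitive-triangle threshold advertised in the abstract must enter.

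For the split-graph families I would abandon global averaging in favour of a local argument: pick the candidate Sullivan vertex from the independent side $X$, so that $|N^{++}(x)|$ is controlled by $2$-step walks whose middle arc lies in the tournament $Y$. When $Y$ is regular or almost regular, every $y\in Y$ has essentially the same out-degree inside $Y$, which makes $|N^{++}(x)|$ (and $|N^-(x)|$) computable directly from the bipartite structure between $X$ and $Y$; the inequality then reduces to an arithmetic check on $|X|$, $|Y|$, and the degrees across the bipartition.
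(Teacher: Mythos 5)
First, a framing point: the statement you are proving is an open conjecture, and the paper does not prove it in general; it only establishes sufficient conditions (Theorem~\ref{thm:tt}: ${\rm tt}(D)<|A|$ implies a Sullivan vertex, hence the planar and triangle-free cases) and some split-graph families. So the question is whether your plan could deliver those special cases, and there is a concrete gap at the very first step. Your reduction is to the global inequality $\sum_v |N^{++}(v)|\ge |A|$, but this inequality is false even under the transitive-triangle hypothesis: for a single transitive triangle we have $|A|=3$, ${\rm tt}(D)=1<|A|$, yet $\sum_v|N^{++}(v)|=0$ (the source's unique $2$-path ends in its own out-neighbourhood, and the other two vertices have no $2$-paths). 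The conjecture still holds there because the source has $N^-=\emptyset$, but your averaging step has already discarded that. So the reduction is to a statement strictly stronger than what is true, and no bound on multiplicities can rescue it. You correctly diagnosed that the naive count gives $\sum_v|N^{++}(v)|\le(\hbox{good }2\hbox{-paths})$, i.e.\ the wrong direction, but the fix is not a multiplicity bound.

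The missing idea, which is the heart of the paper's Theorem~\ref{thm:tt}, is to argue by contradiction and let the negation do the work on the multiplicity side. Assume no Sullivan vertex, so $d^{++}(u)\le d^-(u)-1$ for every $u$. Count $2$-paths out of $u$ via the identity $\sum_{v\in N^+(u)}d^+(v)={\rm tt}_u+w_u$, where $w_u$ is the number of arcs from $N^+(u)$ to $N^{++}(u)$; then bound $w_u\le d^+(u)\,d^{++}(u)\le d^+(u)(d^-(u)-1)$ using the contradiction hypothesis, rather than trying to lower-bound $|N^{++}(u)|$ by distinct endpoints of $2$-paths. Summing over $u$ and using $\sum_u\sum_{v\in N^+(u)}d^+(v)=\sum_u d^+(u)d^-(u)$ yields $|A|=\sum_u d^+(u)\le {\rm tt}(D)$, so ${\rm tt}(D)<|A|$ forces a Sullivan vertex. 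Your split-graph paragraph is also too optimistic: in the paper the Sullivan (or Seymour) vertex is not always found in the independent set $X$ (for regular and almost regular tournaments one often has to locate it in $Y$ via a degree-sum/Eulerian argument over the bipartition, using Lemma~\ref{sums} and $2$-kings), and the almost-regular case in particular requires a long structural analysis, not an arithmetic check.
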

	Note that this conjecture also implies the same special case of Conjecture \ref{CHC} just as Seymour's conjecture does. 
	Also note that the two conjectures coincide for Euler oriented graphs. Thus, it is possible that in general the two conjectures are of somewhat
	``equal difficulty''.

It seems that \AZ{there are only a few results on} Sullivan's conjecture (\AZ{Sullivan's conjecture was confirmed for local tournaments, bipartite tournaments, and quasi-transitive digraphs in \cite{LL, LS1,LS2}}). In this paper, we present several results on Sullivan's conjecture proving the conjecture for 
	tournaments, planar oriented graphs, some families of oriented split graphs,  and almost all oriented graphs.
	We also prove that Seymour's conjecture holds for some families of oriented split graphs (defined in the next section).
	
	Let us conclude this section with two simple results on Sullivan's conjecture and a brief discussion on the content of the other sections.
	
	A {\em tournament} is an orientation of a complete graph. 
	We say that a vertex $u$ of a digraph $D=(V,A)$ is a {\em 2-king} if $V=\{x\}\cup N^+(u)\cup N^{++}(u).$ It is well-known and it is easy to prove that every tournament
	has a 2-king, see, e.g.,  \cite{Moon,BJH} for a proof. Note that $N^-(v)\subseteq N^{++}(v)$
	for every 2-king $v$ of $V$ and hence the following holds.
	\begin{prop}\label{2king}
		Every 2-king $v$ in an oriented graph is a Sullivan vertex. In particular, every tournament has a Sullivan vertex. 
	\end{prop}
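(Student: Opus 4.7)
My plan is to prove the two statements in the order they appear, using only the definitions and the quoted fact that every tournament has a 2-king.

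First, fix a 2-king $v$ of an oriented graph $D=(V,A)$, so by definition $V=\{v\}\cup N^+(v)\cup N^{++}(v)$. I claim $N^-(v)\subseteq N^{++}(v)$. To see this, take any $u\in N^-(v)$, so $uv\in A$. Since $D$ is an oriented graph, we cannot also have $vu\in A$, so $u\notin N^+(v)$; clearly $u\neq v$. The 2-king hypothesis then forces $u\in N^{++}(v)$. Taking cardinalities, $|N^-(v)|\leq |N^{++}(v)|$, which is exactly the condition that $v$ is a Sullivan vertex.

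For the second statement, I invoke the well-known fact (cited in the excerpt to Moon and to \cite{BJH}) that every tournament contains a 2-king; applying the first part of the proposition to any such vertex yields a Sullivan vertex of the tournament.

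The proof has no real obstacle: the only subtlety is noting that the oriented-graph hypothesis (no 2-cycles) is needed to exclude $N^-(v)$ from $N^+(v)$, which is what allows the containment $N^-(v)\subseteq N^{++}(v)$ to follow from the 2-king partition. I would make sure to highlight that single line, since the rest is essentially unpacking definitions.
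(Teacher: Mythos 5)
Your proof is correct and is essentially the paper's argument: the paper also observes $N^-(v)\subseteq N^{++}(v)$ for any 2-king $v$ and then invokes the existence of a 2-king in every tournament. You have merely spelled out the containment (including the role of the no-2-cycle hypothesis) in more detail than the paper does.
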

	As one can see the proof of Sullivan's conjecture for tournaments is much easier than Seymour's conjecture for tournaments \cite{Fisher,HT}. 
	
	For a real $p$ with $0<p<1$, let $\mathscr{D}(n,p)$ denote random oriented graphs with $n$ vertices in which the probability of having an arc between a pair of vertices equals $p.$
	Let $Q$ be a property of oriented graphs and let $\mathscr{D}_Q(n,p)$ denote random oriented graphs in $\mathscr{D}(n,p)$ which satisfy property $Q$.
	We say that {\em almost all oriented graphs have property $Q$} if \AZ{$\lim_{n\rightarrow \infty} |\mathscr{D}_Q(n,p)|/|\mathscr{D}(n,p)|=1$} for each $0<p<1$.
	(Our definition of almost all oriented graphs having property $Q$ is a slight extension of the usual definition where only $p=1/2$ is considered.)
	By Proposition \ref{2king}, to show that almost all oriented graphs have a Sullivan vertex,  it suffices to prove that almost all oriented graphs have a 2-king.
	\begin{prop}\label{almost}
		Almost all oriented graphs have a 2-king.
	\end{prop}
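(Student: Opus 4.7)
The plan is the natural probabilistic one: fix an arbitrary vertex $u$ of a random oriented graph drawn from $\mathscr{D}(n,p)$ and show that $u$ itself is a 2-king with probability tending to $1$ as $n\to\infty$. Since this clearly forces the existence of a 2-king asymptotically almost surely, Proposition \ref{almost} follows (and, via Proposition \ref{2king}, it already implies that almost all oriented graphs have a Sullivan vertex, which is how the authors will use it).

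The key calculation is for a single vertex $v\neq u$: I bound $P(v\notin N^+(u)\cup N^{++}(u))$. This event is the intersection of (i) $uv\notin A$ and (ii) for every $w\in V\setminus\{u,v\}$, not both $uw\in A$ and $wv\in A$. Under $\mathscr{D}(n,p)$ the orientation on each unordered pair is independent of the orientations on the other pairs, and the pairs appearing in (i) together with the $n-2$ pairs of pairs $\{u,w\},\{w,v\}$ appearing in (ii) are pairwise disjoint. Hence the events factor, and since $P(uv\notin A)=1-p/2$ while for each $w$ we have $P(uw\in A \text{ and } wv\in A)=p^2/4$, one gets
\[P(v\notin N^+(u)\cup N^{++}(u)) \;=\; (1-p/2)\,(1-p^2/4)^{n-2}.\]

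A union bound over the $n-1$ choices of $v$ then yields
\[P(u\text{ is not a 2-king}) \;\leq\; (n-1)(1-p/2)(1-p^2/4)^{n-2},\]
and for any fixed $0<p<1$ the factor $(1-p^2/4)^{n-2}$ decays exponentially in $n$, dominating the linear factor $n-1$. Thus $P(u\text{ is a 2-king})\to 1$, which is much stronger than required. The whole argument is essentially a single union-bound calculation, so there is no real obstacle; the only subtlety worth checking is the independence used to factor the probability above, which reduces to observing that the pairs $\{u,v\}$ and $\{u,w\},\{w,v\}$ for $w\in V\setminus\{u,v\}$ are all distinct as unordered pairs.
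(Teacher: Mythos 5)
Your argument is correct and is essentially the paper's proof: the authors also fix a single vertex $u$, compute $P\bigl(v\notin N^+(u)\cup N^{++}(u)\bigr)=(1-p/2)(1-p^2/4)^{n-2}$, and conclude via Markov's inequality applied to the count of such $v$, which is exactly your union bound. No differences worth noting.
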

	The proof is quite simple and it is placed in Appendix.
	
	The rest of the paper is organised as follows: 
	In the next section, we will introduce additional terminology and notation. In Section \ref{TP}, we will prove that all planar oriented graphs satisfy Conjecture \ref{SC} and Conjecture \ref{SLC} by counting the number of transitive triangles. In Section \ref{SOG}, we will prove that some families of oriented split graphs satisfy Conjecture \ref{SC} and Conjecture \ref{SLC}. Finally, in Section \ref{sec:disc} we discuss open problems.
	\section{Additional Terminology and Notation} 

	Let $D=(V,A)$ be a digraph and let $X\subseteq V.$ The subgraph of $D$ induced by $X$ is denoted by $D[X]$. 
	$V(D)=V$ and $A(D)=A.$ A vertex $u\in V(D)$ is a {\em source} if $N^-(u)=\emptyset$. Note that every source is a Sullivan vertex.
	The vertices in $N^+(x)$ ($N^-(x)$, respectively) are out-neighbours (in-neighbours, respectively) of $x.$ 
	Similarly to $N^{++}(u)$ we define
	$N^{--}(u)=\{v\in V:\ vw,wu\in A \mbox{ for some } w\in A\}\setminus N^-(u).$

	To simplify some notation we use the following: $d^+_X(u)=|N^+(u)\cap X|$, $d^-_X(u)=|N^-(u)\cap X|$, $d^{++}_X(u)=|N^{++}(u)\cap X|,$ $d^{--}_X(u)=|N^{--}(u)\cap X|,$
	$d^+(u)=d^+_V(u),$ $d^-(u)=d^-_V(u)$, $d^{++}(u)=d^{++}_V(u),$ $d^{--}(u)=d^{--}_V(u).$

	For $X,Y \subset V(D)$, $X\to Y$ means that for all $x\in X$ and $y\in Y$, $xy\in A(D)$.  When $X$ and/or $Y$ are singletons,
	we do not use brackets, e.g., $x\to y$ if $X=\{x\}$ and $Y=\{y\}.$  If $x\to y$, then $x$ {\em dominates} $y$ and $y$ {\em is dominated by} $x.$ 
	Thus, in particular, a vertex $x$ dominates all its out-neighbours. 
	
	An oriented graph $D$ is an {\em oriented split graph} if $V(D)$ can be partitioned into two sets $X$ and $Y$ such that $X$ is an independent set and $Y$ induces a tournament. 
	We will denote an oriented split graph $D$ by $D=(X, Y,A)$, where the order in the union matters. If for every $x\in X$ and $y\in Y$ either $x\to y$ or $y\to x$ then $D=(X, Y,A)$
	is a {\em complete oriented split graph}. Note that a complete oriented split graph is a multipartite tournament in which all but one partite set are singletons. 
	
	A tournament $T$ is {\em regular} if $d^+(x)=d^-(x)$ for each vertex $x$ in $T$. A tournament $T$ is {\em almost regular} if $|d^+(x)-d^-(x)|=1$ for each vertex $x$ in $T$.
	
	\section{Transitive Triangles and Planar Oriented Graphs}\label{TP}
	
	An orientation of a $K_3$ is a {\em transitive triangle} if it has a source.
	We denote the number of transitive triangles in a digraph $D=(V,A)$ by ${\rm tt}(D)$. Observe that ${\rm tt}(D)\leq |A|(n-2)/3$.
	The number of transitive triangles with source $u$ in a digraph $D$ is denoted by ${\rm tt}_u(D).$
	
	\begin{thm}\label{thm:tt}
		Let $D=(V,A)$ be an oriented graph. If ${\rm tt}(D)<|A|,$ 
		then $D$ has a Sullivan vertex. 
	\end{thm}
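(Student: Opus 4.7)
I would prove the contrapositive: if $D$ has no Sullivan vertex, i.e., $d^{-}(u) > d^{++}(u)$ for every $u$, then $\mathrm{tt}(D) \geq |A|$, contradicting the hypothesis. Under this assumption every $d^-(u) \geq 1$, so $D$ has no source, and moreover, since the $3$-cycles through $u$ are counted by $\alpha(u) := |N^-(u) \cap N^{++}(u)| \leq d^{++}(u) < d^-(u)$, the set $A(u) := N^-(u) \setminus N^{++}(u)$ satisfies $|A(u)| \geq d^-(u) - d^{++}(u) \geq 1$.

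The main tool is a local observation: for every $w \in A(u)$, the condition that $w$ is not reachable from $u$ by a walk of length two translates exactly into $N^+(u) \cap N^-(w) = \emptyset$. Hence each $v \in N^+(u)$ either satisfies $w \to v$, in which case $\{w,u,v\}$ is a transitive triangle with source $w$, middle $u$ and sink $v$, or else $v$ is non-adjacent to $w$ (so that $v \in N^{++}(w)$ via the path $w \to u \to v$). This partitions each ordered triple $(u, w, v)$ with $w \in A(u)$ and $v \in N^+(u)$ into a ``TT piece'' and an ``open-path piece.''

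I then perform the double-count of $S := \sum_{u} d^+(u) |A(u)|$. Using $|A(u)| \geq 1$ gives $S \geq \sum_u d^+(u) = |A|$; the local observation gives an upper bound on $S$ consisting of a transitive-triangle contribution (at most $\mathrm{tt}(D)$, since each TT is counted at most once at its unique middle) plus an open-path contribution. To control the latter I combine the standard identity $\sum_u d^+(u) d^-(u) = \mathrm{tt}(D) + 3 c_3(D) + p(D)$, where $c_3(D)$ counts $3$-cycles and $p(D)$ counts length-two open paths, with the inequality $\sum_u d^+(u) d^{++}(u) \geq 3 c_3(D) + p(D)$ (valid because, for each $y \in N^{++}(u)$, there are at most $d^+(u)$ length-two paths from $u$ to $y$). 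Putting these together should yield $\mathrm{tt}(D) \geq |A|$, contradicting $\mathrm{tt}(D) < |A|$.

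The main technical obstacle is sharpening the upper bound on $S$: a naive estimate gives only $|A| \leq \mathrm{tt}(D) + p(D)$, which is too weak. The key will be to use the stronger bound $|A(u)| \geq d^-(u) - d^{++}(u)$ rather than merely $|A(u)| \geq 1$, and to exploit the strict inequality $\alpha(u) \leq d^{++}(u) < d^-(u)$ in order to cancel the $p(D)$ term and close the argument.
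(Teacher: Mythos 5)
Your middle paragraph already contains a complete proof, and it is essentially the paper's: combining the identity $\sum_u d^+(u)d^-(u)={\rm tt}(D)+3c_3(D)+p(D)$ with the chain $3c_3(D)+p(D)\le\sum_u d^+(u)d^{++}(u)\le\sum_u d^+(u)\bigl(d^-(u)-1\bigr)$ (the last step from the no-Sullivan-vertex assumption) gives $3c_3(D)+p(D)\le {\rm tt}(D)+3c_3(D)+p(D)-|A|$, i.e.\ $|A|\le{\rm tt}(D)$, so the ``main technical obstacle'' of your final paragraph does not exist and the double count of $S$ and the sets $A(u)$ are unnecessary. The paper argues identically, with its quantity $\sum_u w_u$ (arcs from $N^+(u)$ to $N^{++}(u)$, summed over $u$) playing exactly the role of your $3c_3(D)+p(D)$.
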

	\begin{proof} Suppose that $D=(V,A)$ has no Sullivan vertex. 
		Observe that for any $u\in V$, we have
		\[\sum_{v\in N^+(u)}d^+(v)={\rm tt}_u + w_u,\]
		where ${\rm tt}_u$ is the number of transitive triangles with $u$ as a source,
		$w_u$ is the number of arcs from $N^+(u)$ to $N^{++}(u)$.
		Summing up this equation over all vertices, we have
		$${\rm tt}(D) +\sum_{u\in V} w_u=\sum_{u\in V} \sum_{v\in N^+(u)}d^+(v)=\sum_{u\in V} d^-(u)d^+(u).$$
		Since $D$ has no Sullivan vertex, $d^{++}(u)\leq d^-(u)-1$ for all $u\in V$. Thus, for all $u\in V$, $w_u\leq d^+(u)d^{++}(u)\leq d^+(u)(d^-(u)-1)$. Therefore, we have
		\[\sum_{u\in V} d^+(u)=\sum_{u\in V}d^+(u)(d^-(u) - d^-(u)+1)\leq \sum_{u\in V} d^-(u)d^+(u) - \sum_{u\in V} w_u={\rm tt}(D).\]
		The above and $\sum_{u\in V} d^+(u)=|A|$ imply $|A|\le {\rm tt}(D).$
		Thus, if ${\rm tt}(D)<|A|,$ then $D$ has a Sullivan vertex. 
	\end{proof}
	Thus, every digraph with the number of transitive triangles less than the number of arcs has a Sullivan vertex. In particular, we have the following corollaries, of which the first two are immediate.
	\begin{cor}
		If $D$ does not contain any transitive triangle, then $D$ has a Sullivan vertex.
	\end{cor}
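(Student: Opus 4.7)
The plan is to deduce this directly from Theorem \ref{thm:tt}, so the argument will be extremely short; the only item that needs any care is the degenerate case when $D$ has no arcs at all.

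First, I would observe that if $D$ contains no transitive triangle, then by definition ${\rm tt}(D)=0$. Assuming $|A|\ge 1$, this immediately gives ${\rm tt}(D)=0<|A|$, so Theorem~\ref{thm:tt} applies and produces a Sullivan vertex of $D$. This is the entire content of the corollary in the nontrivial case, and it requires no further calculation.

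The only step that is not covered by the hypothesis of Theorem~\ref{thm:tt} is the boundary case $|A|=0$, where the strict inequality ${\rm tt}(D)<|A|$ fails. Here I would handle things by hand: if $A=\emptyset$ then for every vertex $u\in V$ we have $d^-(u)=0\le d^{++}(u)$, so every vertex of $D$ is trivially a Sullivan vertex (assuming as usual that $V\ne\emptyset$). Combining the two cases yields the corollary.

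There is no real obstacle; the statement is essentially just the contrapositive packaging of Theorem~\ref{thm:tt} together with a one-line check of the arcless case, which is presumably why the authors label it as immediate.
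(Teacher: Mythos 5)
Your proposal is correct and matches the paper's (implicit) argument: the corollary is stated as an immediate consequence of Theorem~\ref{thm:tt} via ${\rm tt}(D)=0<|A|$. Your extra handling of the arcless case $|A|=0$ is a reasonable bit of added care that the paper silently skips, but it does not change the approach.
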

	\begin{cor}
		If $D$ is an orientation of a triangle-free graph, then $D$ has a Sullivan vertex.
	\end{cor}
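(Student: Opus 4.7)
The plan is to derive this as an immediate consequence of the preceding corollary (equivalently, of Theorem~\ref{thm:tt}). First I would observe that if $G$ is the underlying graph of $D$ and $G$ is triangle-free, then $D$ itself contains no triangles at all: any three vertices forming a directed (cyclic or transitive) triangle in $D$ would, after forgetting orientations, form a $K_3$ in $G$. In particular, $D$ contains no transitive triangle, so ${\rm tt}(D)=0$.

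Next I would handle the arc count. If $|A(D)|\geq 1$, then ${\rm tt}(D)=0<|A(D)|$ and Theorem~\ref{thm:tt} directly supplies a Sullivan vertex. If $|A(D)|=0$, every vertex $u$ satisfies $d^-(u)=d^{++}(u)=0$, so every vertex is trivially Sullivan. Either way, $D$ has a Sullivan vertex. There is no substantive obstacle to overcome: the only thing worth stating explicitly is the reduction from ``triangle-free underlying graph'' to ``no transitive triangle in $D$,'' after which the previous corollary applies verbatim.
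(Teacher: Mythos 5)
Your proof is correct and follows the same route the paper intends: a triangle-free underlying graph forces ${\rm tt}(D)=0$, so Theorem~\ref{thm:tt} applies whenever $|A|\geq 1$ (the paper simply labels this corollary ``immediate''). Your explicit treatment of the degenerate case $|A|=0$ is a harmless extra precaution that the paper omits.
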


To prove the next corollary, we will use the following:	
	\begin{lemma}\label{lem:t}
Let $G$ be a connected planar graph with $n\geq 3$ vertices, $m$ edges and $t$ triangles. Then $t\le m-2.$	
	\end{lemma}
\begin{proof}
We prove by induction on $n$. It is easy to see that if $n=3$, the inequality holds. Assume that $n\ge 4$ and fix a plane embedding $H$ of $G.$ Let $H$ have $f$ faces. We call a triangle $C$ {\em separating} if there are vertices of $H$ on both sides of the Jordan curve determined by $C.$ First, consider the case when $H$ has no separating triangle. Then 
every triangle has an empty side and therefore bounds a face. Distinct triangles bound distinct faces, so $t\le f.$ Hence,  Euler’s formula gives $t\leq f=m-n+2\le m-2$. 

Now consider the case when $H$ has a separating triangle $C$. Let $G_1$ and $G_2$ be the plane subgraphs consisting of $C$ together with everything on the two respective sides of $C$.  Let $m_1$ and $m_2$ be the number of edges in $G_1$ and $G_2$, respectively, and $t_1$ and $t_2$ the number of triangles in $G_1$ and $G_2$, respectively. Therefore, $m=m_1+m_2-3$ and  $t=t_1+t_2-1$. In addition, as $G$ is connected, both $G_1$ and $G_2$ are connected subgraphs with more than three vertices. Thus, by induction hypothesis, 
$t_i\le m_i-2$ for $i=1,2$ and therefore $t=t_1+t_2-1\le m_1+m_2-5=m-2$. 
\end{proof}
	
	\begin{cor}\label{cor:t}
		Every planar oriented graph $D=(V,A)$ has a Sullivan vertex.
	\end{cor}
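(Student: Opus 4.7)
The plan is to apply Theorem~\ref{thm:tt}: it suffices to show that ${\rm tt}(D) < |A|$ whenever $|A| \geq 1$ (if $|A| = 0$ every vertex is a source and hence a Sullivan vertex). Since every transitive triangle of $D$ lies on a triangle of the underlying planar graph $G$, we have ${\rm tt}(D) \leq t(G)$, where $t(G)$ denotes the number of triangles of $G$, so it is enough to prove $t(G) < |A|$. I would first prune $G$ by iteratively deleting isolated vertices and vertices of degree~$1$; these lie in no triangle, so $t$ is preserved while both the number of edges and the number of vertices weakly decrease. Let $G^*$ be the resulting subgraph. If $G^*$ is empty then $t(G) = 0 < |A|$ and we are done; otherwise every vertex of $G^*$ has degree at least $2$, which forces $n(G^*) \geq 3$.

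The key structural input is that for every vertex $v$ of a planar graph, the subgraph $G^*[N(v)]$ is \emph{outerplanar}. This follows from the classical characterisation ``$H$ is outerplanar iff the join $H \vee K_1$ is planar'' applied to the planar subgraph $G^*[N(v) \cup \{v\}]$, in which $v$ is a universal vertex. Since an outerplanar graph on $d \geq 2$ vertices has at most $2d - 3$ edges, and the triangles of $G^*$ through $v$ are in bijection with the edges of $G^*[N(v)]$, double counting gives
\[
3\,t(G^*) \;=\; \sum_{v \in V(G^*)} e\bigl(G^*[N(v)]\bigr) \;\leq\; \sum_{v \in V(G^*)} \bigl(2d(v) - 3\bigr) \;=\; 4\,m(G^*) - 3\,n(G^*).
\]

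To close the argument I would invoke Euler's formula: since $G^*$ is planar with $n(G^*) \geq 3$, we have $m(G^*) \leq 3\,n(G^*) - 6$, equivalently $3\,n(G^*) \geq m(G^*) + 6$. Plugging this in gives $3\,t(G^*) \leq 3\,m(G^*) - 6$, so $t(G) = t(G^*) \leq m(G^*) - 2 \leq |A| - 2 < |A|$, and Theorem~\ref{thm:tt} produces the Sullivan vertex. The main technical point is establishing the outerplanarity of the neighbourhood subgraph; once that is in hand, the rest is a short double counting combined with Euler's inequality.
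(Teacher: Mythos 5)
Your proof is correct, and while it funnels through the same two ingredients as the paper (Theorem~\ref{thm:tt} plus Euler's formula), the key counting step is genuinely different. The paper bounds ${\rm tt}(D)$ by the number $f$ of faces of a planar embedding and finishes with $f<m$; that is a one-line step, but the inequality ${\rm tt}(D)\le f$ is asserted without justification, and read literally (``every transitive triangle accounts for a distinct face'') it is false: a stacked triangulation of $K_4$ on $5$ vertices, oriented acyclically, has $7$ transitive triangles but only $6$ faces (its $9$ edges still rescue the conclusion ${\rm tt}(D)<|A|$). Your route instead bounds the total number of triangles of the underlying graph by $m-2$: prune to minimum degree at least $2$, use that each neighbourhood $G^*[N(v)]$ is outerplanar (via the $H\vee K_1$ characterisation) and hence has at most $2d(v)-3$ edges, double count to get $3t\le 4m-3n$, and apply $m\le 3n-6$. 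The pruning is exactly what legitimises the $2d(v)-3$ term, the disconnected case is covered since $m\le 3n-6$ needs only $n\ge 3$, and the degenerate cases ($|A|=0$, $G^*$ empty) are handled. So your argument is longer but fully substantiates the inequality that the paper leaves unproved, and your bound $t(G)\le m-2$ is tight (stacked triangulations attain it); what the paper's version buys is brevity, at the cost of a step that, as stated, needs repair along essentially the lines you give.
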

	\begin{proof} 	
Without loss of generality, we may assume that $D$  is connected.  The conclusion clearly holds when $|V|\leq 2$. Thus, we may further assume that $|V|\ge 3$. Suppose that the underlying undirected graph of $D$ has $t$ triangles. Then, by Lemma \ref{lem:t}, ${\rm tt}(D)\le t\le |A|-2$. Hence, by Theorem  \ref{thm:tt}, we are done.
\end{proof}	
	
	Note that Seymour's conjecture has been verified for digraphs with the minimum out-degree at most 6 \cite{KL}. This implies Seymour's conjecture holds for planar oriented graphs. In fact, a planar graph with $n$ vertices has at most $3n-6$ edges. Thus, the minimum out-degree of planar  oriented graphs is at most $(3n-6)/n<3$.

	\section{Oriented Split Graphs}\label{SOG}
	
	We start from the following simple but useful lemma, whose simple proof is omitted.
	\begin{lemma}\label{sums}
	For every oriented split graph $D=(X, Y,A),$ we have the following:
	$\sum_{x \in X} d^{--}_Y(x)  =  \sum_{y \in Y} d^{++}_X(y), 
			\sum_{x \in X} d^{-}_Y(x)  =  \sum_{y \in Y} d^{+}_X(y).$
	\end{lemma}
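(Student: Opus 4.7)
The plan is to prove both identities by straightforward double counting: in each case, interpret both sides as enumerating the same set of ordered pairs $(x,y) \in X \times Y$.

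For the second identity, both sides count the arcs directed from $Y$ to $X$. Namely, $\sum_{x \in X} d^{-}_Y(x)$ counts pairs $(x,y)$ with $x\in X$, $y\in Y$, and $yx\in A$, while $\sum_{y \in Y} d^{+}_X(y)$ counts pairs $(y,x)$ with $y\in Y$, $x\in X$, and $yx\in A$. These describe the same set, so the two sums agree.

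For the first identity, I would check that both sides count pairs $(x,y)\in X\times Y$ satisfying the symmetric conditions (i)~there exists $w\in V$ with $yw,wx\in A$, and (ii)~$yx\notin A$. On the left, $y\in N^{--}(x)$ is defined by exactly this pair of conditions (from $x$'s perspective). On the right, $x\in N^{++}(y)$ unpacks to the same two conditions (from $y$'s perspective), since $yx\notin A$ is equivalent to $x\notin N^+(y)$. Hence both sums enumerate the same pairs.

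As a sanity check I would note (though the proof does not require it) that because $X$ is independent, any witness $w$ in condition (i) must lie in $Y$: if $w\in X$, then $wx\in A$ would be an arc inside $X$, contradicting independence. The only real care needed — which is the ``main obstacle,'' though it is purely bookkeeping — is verifying that the ``removed neighbourhood'' parts of the definitions of $N^{++}$ and $N^{--}$ give rise to exactly the same exclusion $yx\notin A$ on the pair $(x,y)$, so that no pair is counted on one side and not the other. Once this is observed, both equalities follow immediately.
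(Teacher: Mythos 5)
Your proof is correct: both identities follow from the double count you describe, and the key observation that $y\in N^{--}(x)$ and $x\in N^{++}(y)$ both unpack to ``there exists $w$ with $yw,wx\in A$ and $yx\notin A$'' is exactly right under the paper's definitions. The paper omits the proof as ``simple,'' and your argument is evidently the intended one.
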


We will consider three classes of oriented split graphs in the corresponding subsections. 
	
	\subsection{Complete oriented split  graphs }
	Note that a vertex of maximum out-degree in a tournament $T$ is a 2-king of $T$. It has been shown in \cite{FY} that Seymour's second neighbourhood conjecture holds for oriented split graphs with only one vertex in the independent set and for complete oriented split graphs. We show that Sullivan's conjecture also holds for these oriented graphs.
	\begin{lemma}\label{2kingdominates}
		Let $T$ be a tournament. If $x\in V(T)$ is not a 2-king, then $N^+[x]=N^+(x)\cup \{x\}$, is dominated by a 2-king.
	\end{lemma}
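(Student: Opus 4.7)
The plan is to exploit the fact that $x$ failing to be a 2-king forces the existence of vertices that dominate the entire closed out-neighbourhood $N^+[x]$, and then to locate a 2-king of $T$ among these dominators. Concretely, I would set
\[
S = \{z \in V(T) : z \to v \text{ for every } v \in N^+[x]\},
\]
and first argue that $S$ is nonempty. For this I use a witness $z \in V(T) \setminus (\{x\} \cup N^+(x) \cup N^{++}(x))$, which exists because $x$ is not a 2-king. Since $z \ne x$ and $z \notin N^+(x)$, we have $z \to x$; and for any $w \in N^+(x)$, the alternative $w \to z$ would put $z$ into $N^{++}(x)$, so $z \to w$. Hence $z \in S$.

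Next, since $T[S]$ is itself a tournament, it contains a 2-king $z^*$. The heart of the argument is to upgrade $z^*$ to a 2-king of the whole of $T$; this, together with $z^* \in S$, yields the lemma. I would split an arbitrary $u \in V(T)\setminus\{z^*\}$ into three cases: if $u \in N^+[x]$, then $z^* \to u$ from $z^* \in S$; if $u \in S$, then $u \in N^+_{T[S]}(z^*) \cup N^{++}_{T[S]}(z^*) \subseteq N^+_T(z^*) \cup N^{++}_T(z^*)$ since $z^*$ is a 2-king of $T[S]$.

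The main obstacle is the third case, where $u \notin N^+[x] \cup S$. Here I would observe that $u \neq x$ forces $u \to x$, and that $u \notin S$ produces some $v \in N^+[x]$ with $v \to u$. The case $v = x$ is ruled out because it would give $x \to u$, contradicting $u \to x$; hence $v \in N^+(x)$, and we obtain the length-two walk $z^* \to v \to u$, placing $u$ in $N^+(z^*) \cup N^{++}(z^*)$. This is the one step where the argument is genuinely delicate rather than routine, and it is the place where both the defining property of $S$ and the tournament structure are used simultaneously.
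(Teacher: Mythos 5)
Your proof is correct and follows essentially the same route as the paper: your set $S$ of common dominators of $N^+[x]$ coincides exactly with the paper's set $U=V(T)\setminus(N^+[x]\cup N^{++}(x))$, and both arguments take a 2-king of the induced subtournament on this set and promote it to a 2-king of $T$. You merely spell out the verification (in particular the case of vertices in $N^{++}(x)$) that the paper leaves as an observation.
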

	\begin{proof}
		Since $x$ is not a 2-king, $U=V(T)\setminus (N^+[x]\cup N^{++}(x))$ is not empty. Let $y$ be a 2-king in $T[U]$. Then, we can observe that $y$ dominates all the vertices in $N^+[x]$ and therefore $y$ is a 2-king of $T$.
	\end{proof}
	Note that we may always assume that an oriented graph $D$ is source-free since every source in $D$ is a Sullivan vertex. 
	\begin{thm}
		If $D=(\{x\}, Y,A)$ is a source-free oriented split graph, then $D$ has a 2-king in $Y$. 
	\end{thm}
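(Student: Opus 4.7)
The plan is to reduce the problem to the preceding Lemma~\ref{2kingdominates} on 2-kings of tournaments. Let $T := D[Y]$ and note that since $D$ is source-free, the vertex $x$ must have at least one in-neighbour; as $X=\{x\}$ is independent, this in-neighbour lies in $Y$. So I can pick some $y_0 \in N^-(x)\cap Y$, and the key observation is that any 2-king $w$ of $T$ that reaches $y_0$ in at most one step inside $T$ automatically reaches $x$ in at most two steps inside $D$, and therefore is a 2-king of $D$ sitting in $Y$ (every other vertex of $Y\setminus\{w\}$ is already covered because $w$ is a 2-king of $T$).

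With this in mind, I would split into two cases. First, if $y_0$ itself happens to be a 2-king of $T$, then since $y_0 \to x$ the vertex $y_0$ is a 2-king of $D$ and we are done. Otherwise, I apply Lemma~\ref{2kingdominates} to the tournament $T$ with the vertex $y_0$: this produces a 2-king $w$ of $T$ that dominates $N^+_T[y_0]=\{y_0\}\cup N^+_T(y_0)$. In particular $w\to y_0$, and combined with $y_0 \to x$ this gives $x\in N^{++}_D(w)$.

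It remains to verify that $w$ is indeed a 2-king of $D$, not just of $T$. Since $V(D)=\{x\}\cup Y$ and $w\in Y$, I only need to cover every vertex of $Y\setminus\{w\}$ and the vertex $x$. The former is immediate from $w$ being a 2-king of $T$ (and from $N^+_T(w)\subseteq N^+_D(w)$, $N^{++}_T(w)\subseteq N^+_D(w)\cup N^{++}_D(w)$, since all the relevant arcs lie inside $Y$). The latter is exactly what the previous paragraph established. Hence $w$ is a 2-king of $D$ lying in $Y$.

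I do not anticipate any real obstacle: Lemma~\ref{2kingdominates} does all the work, and the only subtlety is remembering that the ``2-king in $Y$'' is measured with respect to $D$ (so we must pick up $x$ via the intermediate vertex $y_0$), which is precisely why we start from an in-neighbour of $x$ rather than an arbitrary vertex of $Y$.
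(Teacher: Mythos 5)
Your proof is correct and follows essentially the same route as the paper's: pick an in-neighbour $y_0\in Y$ of $x$ (which exists since $D$ is source-free), and either $y_0$ is a 2-king of $D[Y]$ or Lemma~\ref{2kingdominates} yields a 2-king $w$ of $D[Y]$ with $w\to y_0\to x$. The only difference is that you spell out the routine verification that a 2-king of $D[Y]$ reaching $x$ in two steps is a 2-king of $D$, which the paper leaves implicit.
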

	\begin{proof}
		Recall that $D[Y]$ is a tournament. Since $D$ is source-free, $x$ has an in-neighbour $y$ in $Y$. If $y$ is a 2-king of $D[Y]$, then $y$ is also a 2-king of $D$ and we are done. Otherwise, by Lemma \ref{2kingdominates}, $y$ is dominated by a 2-king $y'$ of $D[Y]$. Now, $y'$ is also a 2-king of $D$ because $y'\to y\to x$.
	\end{proof}
	\begin{thm}
		If $D=(X, Y,A)$ is a complete oriented split graph, then $D$ has a Sullivan vertex.
	\end{thm}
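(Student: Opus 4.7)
The plan is to follow the 2-king strategy used in the previous theorem. Let $T = D[Y]$ and take $u$ to be a 2-king of the tournament $T$, which exists since every tournament has a 2-king. We may assume $Y \neq \emptyset$, for otherwise $D$ has no arcs and every vertex is trivially a Sullivan vertex. I will show that either $u$ itself is a Sullivan vertex of $D$, or an explicit ``blocker'' in $X$ is.

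Because the arcs of $T$ are exactly the arcs of $D$ restricted to $Y$, the 2-king property of $u$ inside $T$ yields $N^-(u) \cap Y \subseteq N^{++}(u)$ inside $D$, and so $N^-(u) \setminus N^{++}(u) \subseteq X$. Since $X$ is independent, for $x \in X$ with $x \to u$ to lie in $N^{++}(u)$ one needs some $w \in N^+_Y(u)$ with $w \to x$. Completeness of the $X$--$Y$ arcs turns the failure of every such $w \to x$ into the condition ``$x \to w$ for every $w \in N^+_Y(u)$.'' Setting
\[
B = \{x \in X : x \to u \text{ and } x \to w \text{ for every } w \in N^+_Y(u)\},
\]
we therefore have $N^-(u) \setminus N^{++}(u) = B$. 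If $B = \emptyset$, then $u$ is a Sullivan vertex.

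Otherwise, pick any $b \in B$. Independence of $X$ gives $N^-(b) \subseteq Y$, and the defining property of $b$ forces $N^-(b) \subseteq Y \setminus (\{u\} \cup N^+_Y(u)) = N^-_Y(u)$. For every $y \in N^-(b)$ the 2-king property of $u$ in $T$ produces some $w \in N^+_Y(u)$ with $w \to y$; combined with $b \to w$ (since $b \in B$) and $y \notin N^+(b)$, this places $y$ in $N^{++}(b)$. Hence $N^-(b) \subseteq N^{++}(b)$, so $d^-(b) \leq d^{++}(b)$ and $b$ is a Sullivan vertex.

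The argument is essentially routine once the right 2-king is in hand; the only subtlety is pinning down $N^-(u) \setminus N^{++}(u)$ \emph{exactly}. That step is where completeness of the split structure (turning missing $Y \to X$ arcs into reverse $X \to Y$ arcs) and independence of $X$ (ruling out second-neighbour paths through $X$) are both used.
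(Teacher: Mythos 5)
Your proof is correct and follows essentially the same route as the paper's: take a 2-king $u$ of the tournament $D[Y]$, and if some in-neighbour $x\in X$ of $u$ is not in $N^{++}(u)$, completeness forces $x\to N^+_Y(u)$, whence $N^-(x)\subseteq N^-_Y(u)\subseteq N^{++}(x)$. The only cosmetic difference is that you characterise the blocking set $B$ exactly, where the paper simply picks one element of $N^-_X(u)\setminus N^{++}_X(u)$ and argues about it directly.
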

	\begin{proof}
		Let $v$ be a vertex with maximum out-degree in $D[Y]$. Since $v$ is a 2-king in $D[Y]$, $N_Y^-(v)\subseteq N_Y^{++}(v)$ and therefore $d^-_Y(v)\leq d^{++}_Y(v)$. If $N^-_X(v)\setminus N^{++}_X(v)$ is empty then we have done since $v$ is the required vertex. Otherwise, let $u\in N^-_X(v)\setminus N^{++}_X(v)$. Since $u\notin N^{++}_X(v)$, $u$ dominates all the vertices in $N_Y^+(v)$ which implies that the in-neighbours of $u$ in $Y$ are also in its second out-neighbourhood. Thus, $d_Y^-(u)\leq d_Y^{++}(u)$ and $u$ is the required vertex since all in-neighbours of $u$ are contained in $Y$.
	\end{proof}
	
	\subsection{Oriented Split Graphs with a Regular Tournament}
	Recall that a vertex of the maximum out-degree in a tournament is a 2-king. As a result, all vertices of a regular tournament are 2-kings.
	Note that (C1) and (C2) in the following theorem imply that Seymour's and Sullivan's conjectures hold for oriented split graphs with a regular tournament.
	\begin{thm}\label{thm:regular}
		Let $D=(X, Y,A(D))$ be an oriented split graph, where $Y$ induces a regular tournament $T$ in $D$. \AZ{For any $x\in V(D)$, let $d^+(x)=d^+_D(x)$ and $d^-(x)=d^-_D(x)$.} Then 
		\begin{description}
			\item[(A)] For every $x \in X$ the following two statements hold.
			\begin{description}
				\item[(A1)] $d^{++}_Y(x) \geq d^{+}(x)$ or $d^{++}_Y(x) \geq d^{-}(x)$ (or both).
				\item[(A2)] $d^{--}_Y(x) \geq d^{-}(x)$ or $d^{--}_Y(x) \geq d^{+}(x)$ (or both).
			\end{description}
			\item[(B1)] Either there exists an $x' \in X$ such that $d^{++}_Y(x') \geq d^{+}(x')$ or $d^{--}_Y(x) \geq d^{-}(x)$ for all $x \in X$.
			\item[(B2)] Either there exists an $x' \in X$ such that $d^{++}_Y(x') \geq d^{-}(x')$ or  $d^{--}_Y(x) \geq d^{+}(x)$ for all $x \in X$.
			\item[(C1)] There exists a $v \in V(D)$ such that $d^{++}(v) \geq d^{+}(v)$.
			\item[(C2)] There exists a $v \in V(D)$ such that $d^{++}(v) \geq d^{-}(v)$.
		\end{description}
	\end{thm}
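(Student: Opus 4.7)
The plan is to prove the six items in the order (A1), (A2), (B1), (B2), (C1), (C2), concentrating the combinatorial work in (A1). Statement (A2) will follow from (A1) by arc-reversal symmetry, (B1) and (B2) will reduce to (A1)/(A2) by short case analyses, and (C1), (C2) will reduce to (B1), (B2) via Lemma \ref{sums} together with the fact that every vertex of a regular tournament is a 2-king.

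For (A1), fix $x \in X$ and write $a = d^+(x)$, $b = d^-(x)$, $A = N^+_Y(x)$, and $r = (|Y|-1)/2$. Because $X$ is independent, a vertex $v \in Y \setminus A$ fails to lie in $N^{++}_Y(x)$ exactly when $A \subseteq N^+_Y(v)$; writing $S = \{v \in Y \setminus A : A \subseteq N^+_Y(v)\}$, we have $d^{++}_Y(x) = (|Y|-a) - |S|$. If $a > r$, then no vertex of $T = D[Y]$ has enough out-arcs to dominate $A$, so $S = \emptyset$; then $d^{++}_Y(x) = |Y|-a \geq b$, and since $a + b \leq |Y|$ forces $b \leq r < a$, this is exactly $\min(a,b)$. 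If $a \leq r$, each $v \in S$ uses $a$ of its $r$ out-arcs on $A$, leaving at most $r-a$ inside $Y \setminus A$; comparing the $\binom{|S|}{2}$ arcs of $T[S]$ with the available budget $|S|(r-a)$ forces $|S| \leq 2(r-a)+1 = |Y|-2a$, and hence $d^{++}_Y(x) \geq a \geq \min(a,b)$. Either way (A1) holds. Applying (A1) to the digraph obtained by reversing all arcs of $D$ (still an oriented split graph whose $Y$-part is regular) yields (A2).

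For (B1), I take the contrapositive: if no $x' \in X$ satisfies its first alternative, then $d^{++}_Y(x) < d^+(x)$ for every $x \in X$, so (A1) forces $d^{++}_Y(x) \geq d^-(x)$ and in particular $d^-(x) < d^+(x)$; then (A2) gives $d^{--}_Y(x) \geq \min(d^+(x), d^-(x)) = d^-(x)$, which is the second alternative. The argument for (B2) is symmetric. For (C1), the first alternative of (B1) already produces a Seymour vertex in $X$ since $d^{++}(x') \geq d^{++}_Y(x') \geq d^+(x')$. Under the second alternative, summing $d^{--}_Y(x) \geq d^-(x)$ over $X$ and invoking Lemma \ref{sums} twice gives $\sum_{y \in Y} d^{++}_X(y) \geq \sum_{y \in Y} d^+_X(y)$, so some $y^* \in Y$ has $d^{++}_X(y^*) \geq d^+_X(y^*)$; since $y^*$ is a 2-king of the regular tournament $T$, we also have $d^{++}_Y(y^*) \geq d^+_Y(y^*)$, and adding the two inequalities yields $d^{++}(y^*) \geq d^+(y^*)$. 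The proof of (C2) is parallel, using (B2) and replacing $d^+$ by $d^-$ throughout.

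The main obstacle I expect is the double-count for $|S|$ in (A1): the cases $a \leq r$ and $a > r$ must be separated cleanly, and the side inequality $b \leq r$ in the second case is what lets the weaker estimate still deliver $\min(a,b)$. After (A1) is in place, the remainder of the theorem is bookkeeping plus reuse of the 2-king property already exploited in the complete oriented split graph section.
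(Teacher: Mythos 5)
Your proposal is correct and follows essentially the same route as the paper: (A2) by arc reversal/symmetry, (B1)--(B2) by combining (A1) and (A2) via the $\min$ observation, and (C1)--(C2) by summing over $Y$ with Lemma \ref{sums} and the fact that every vertex of a regular tournament is a 2-king. The only (harmless) variation is in (A1), where the paper bounds the same exceptional set (its $C$, your $S$) by balancing arcs entering and leaving $C$ in the Eulerian tournament $T$ with the case split $C=\emptyset$ versus $C\neq\emptyset$, whereas you bound $|S|$ directly via the internal out-degree count $\binom{|S|}{2}\le |S|(r-a)$ with the case split $a\le r$ versus $a>r$ --- both counts are valid and yield the same two alternatives.
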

	\begin{proof}
		We first prove part (A). Let $x \in X$ be arbitrary and let $A = N^+(
		x)$, $B=N^{++}(x) \cap Y$ and $C=Y \setminus (A \cup B)$.
		First assume that $C \not= \emptyset$. Note that all vertices in $C$ per definition dominate all vertices in $A$. 
		As $T$ is eulerian there are equally many arcs entering $C$ as leaving $C$ in $T$, which implies that we must have $|B| \geq |A|$ 
		(as otherwise more arcs would leave $C$ than enter $C$ in $T$), or equivalently $d^{++}_Y(x) \geq d^{+}(x)$.
		Alternatively, if $C = \emptyset$ then $N^{-}(x) \subseteq B = N^{++}(x) \cap Y$, 
		which implies that $d^{++}_Y(x) \geq d^{-}(x)$. This implies part (A1), as either
		$C = \emptyset$ or $C \not= \emptyset$. Part (A2) can be proved analogously.
		
		We now prove part (B1). 
		If for any $x\in X$, $d^{++}_Y(x)<d^{+}(x)$, then by part~(A1), we have
		$$
		d^{-}(x)\leq d^{++}_Y(x)<d^{+}(x).
		$$
		By part~(A2), this implies
		$$
		d^{--}_Y(x)\geq \min\{d^{+}(x), d^{-}(x)\}=d^{-}(x)
		$$
		as desired. Part (B2) can be proved analogously.
		

		We now prove part~(C1). By part~(B1) we only need to consider the case when $d^{--}_Y(x) \geq d^{-}(x)$ for all $x \in X$.
		Note that every vertex $y \in Y$ satisfies $d^{++}_Y(y) = d^{-}_Y(y)=d^+_Y(y)$ as $T$ is a regular tournament and therefore every vertex is a $2$-king in $T$.
		The following now holds due to Lemma \ref{sums}.
		\[
		\begin{array}{rcl}
			\sum_{y \in Y} \left( d^{++}(y)-d^{+}(y) \right) 
			& = & \sum_{y \in Y} \left( d^{++}_Y(y)-d^{+}_Y(y) \right) \\ \vspace{0.2cm}
			&   & + \sum_{y \in Y} \left( d^{++}_X(y)-d^{+}_X(y) \right) \\ \vspace{0.2cm}
			& = &  0 +  \sum_{x \in X} \left( d^{--}_Y(x)-d^{-}(x) \right) \\ 
			& \geq &  0.\\ 
		\end{array}
		\]
		This implies that for some $y \in Y$ we must have $d^{++}(y) \geq d^{+}(y)$. Part (C2) can be proved by using similar arguments with (B2).
	\end{proof}
	
	\subsection{Oriented Split  Graphs with an Almost Regular Tournament}	
	
	Let $T$ be an almost regular tournament. 	By definition, $V(T)$ can be partitioned into two sets $V^+_T$ and $V^-_T$ such that for every $u\in V^+_T$,
	$d^+(u)=d^-(u)+1$ and  for every $w\in V^-_T$, $d^-(w)=d^+(w)+1.$
\begin{prop}\label{prop:1}
		Let $T$ be an almost regular tournament such that $d^+(u)=d$ for every $u\in V^+_T$. Then $|V^+_T|=|V^-_T|=d$ and hence $T$ has $2d$ vertices.
		Moreover, for any $v\in V(T)$, we have that 
		\[
		d^{++}(v)-d^{-}(v)=\left \{\begin{array}{cc}
			0  &  \hbox{if $v$ is a 2-king;}\\
			-1  & \hbox{otherwise.}
		\end{array}
		\right.
		\]
	\end{prop}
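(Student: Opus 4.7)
My plan is to handle the two assertions in turn. For the cardinality count, I would use nothing more than the tournament identity $d^+(u)+d^-(u)=|V(T)|-1$ together with the hypothesis. Applying it to any $u\in V^+_T$ gives $d+(d-1)=|V(T)|-1$, so $|V(T)|=2d$; applying it to any $w\in V^-_T$ then forces $d^+(w)=d-1$ and $d^-(w)=d$. A total arc count, $\sum_u d^+(u)=\binom{2d}{2}=d(2d-1)$, splits as $|V^+_T|\cdot d+|V^-_T|\cdot(d-1)$, and combining with $|V^+_T|+|V^-_T|=2d$ yields $|V^+_T|=|V^-_T|=d$.

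For the second assertion I would introduce $C(v):=N^-(v)\setminus N^{++}(v)$ and first observe that $N^{++}(v)\subseteq N^-(v)$ in any tournament (a vertex at distance exactly $2$ from $v$ lies in $V(T)\setminus\{v\}$ but not in $N^+(v)$, while $V(T)\setminus\{v\}=N^+(v)\sqcup N^-(v)$). Hence $d^{++}(v)-d^-(v)=-|C(v)|$, and $v$ is a $2$-king precisely when $C(v)=\emptyset$. So the proposition is equivalent to showing $|C(v)|\le 1$ for every $v$, with equality forced when $v$ fails to be a $2$-king.

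To prove that bound I would characterize membership: $w\in C(v)$ iff $w\to v$ and $w$ dominates every out-neighbour of $v$, i.e.\ $N^+(v)\cup\{v\}\subseteq N^+(w)$. Taking cardinalities and using that all out-degrees lie in $\{d-1,d\}$ forces $d^+(v)=d-1$, $d^+(w)=d$, and crucially the set-inclusion is an equality $N^+(w)=N^+(v)\cup\{v\}$. Two distinct elements $w_1,w_2\in C(v)$ would then share the same out-neighbourhood; but the arc between them in the tournament places one of them, say $w_2$, into $N^+(w_1)=N^+(w_2)$, a contradiction. The main subtlety I expect lies exactly here: the almost regular hypothesis is just tight enough to promote a one-step degree inequality into a set equality, after which the tournament structure finishes the argument in one line.
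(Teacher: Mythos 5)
Your proposal is correct and follows essentially the same route as the paper: reduce the second claim to bounding $|N^-(v)\setminus N^{++}(v)|$ via $N^{++}(v)\subseteq N^-(v)$, characterize its members as vertices dominating $N^+(v)\cup\{v\}$, and use the degree cap $d$ to rule out two such vertices (the paper phrases the final contradiction as one of them having out-degree $d+1$, you as two vertices sharing an out-neighbourhood; these are the same observation). The cardinality count is likewise the paper's degree-sum argument, just written out in more detail.
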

	\begin{proof} The first part of the proposition follows from the fact that in every digraph $H$, the sum of out-degrees equals the sum of in-degrees equals the number of arcs in $H.$

	Now we prove the formula for $d^{++}(v)-d^{-}(v).$
	If $v$ is a 2-king, then since $T$ is a tournament, $N^{-}(v)= N^{++}(v)$ and we are done. If $v$ is not a $2$-king, then $N^{-}(v)\setminus N^{++}(v)\neq \emptyset$ and $v\in V^-_T$. Since $T$ is a tournament, $N^{++}(v)\subseteq N^-(v)$ and therefore $d^-(v)-d^{++}(v)=|N^{-}(v)\setminus N^{++}(v)|$. Thus, it remains to be shown that  $|N^{-}(v)\setminus N^{++}(v)|=1$. 
	For any $u\in N^{-}(v)\setminus N^{++}(v)$, we have that $u$ dominates $v$ and all vertices in $N^+(v)$. Thus, $d^+(u)\geq d^+(v)+1\geq d$ and so $u\in V^+_T$. If there exists another vertex $u'\in N_T^{-}(v)\setminus N_T^{++}(v)$ then $u'$ dominates $v$ and all vertices in $N^+(v)$ and also $u\in V^+_T$. However,
	either $u$ or $u'$ has out-degree $d$+1 (depending on the direction of the arc between $u$ and $u'$), a contradiction.
	\end{proof}
	
	For any subset $S$ of \AZ{$V(T)$}, \AZ{let $d^+(S)$ denotes the number of arcs from $S$ to $V\setminus S$}.
	Since $d^+(S)-d^-(S)=\sum_{v\in S}(d^+(v)-d^-(v))$, we have the following observation.
	\begin{observation}\label{obs: d+S}
		Let $\AZ{T}=(V,A)$ be an almost regular tournament with order 2d and $S\subseteq V(D)$, then $|d^+(S)-d^-(S)|\leq |S|$. Furthermore, $d^+(S)=d^-(S)+|S|$ ($d^-(S)=d^+(S)+|S|$, respectively) if and only if $S\subseteq V^+_T$  ($S\subseteq V^-_T$, respectively). 
	\end{observation}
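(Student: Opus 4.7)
The plan is to read the observation as a direct consequence of the identity highlighted in the sentence immediately preceding it, namely
\[
d^+(S)-d^-(S)=\sum_{v\in S}\bigl(d^+(v)-d^-(v)\bigr),
\]
combined with the definition of an almost regular tournament. The strategy is thus a short counting argument rather than any structural insight about second neighbourhoods.

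First I would verify the identity itself. If $d^+(S)$ and $d^-(S)$ are interpreted as the number of arcs leaving, respectively entering, the set $S$, then summing $d^+(v)$ over $v\in S$ counts each arc inside $S$ once and each arc from $S$ to $V\setminus S$ once; the analogous statement holds for $d^-(v)$, and subtracting makes the internal-arc contributions cancel. (If instead $d^+(S)$ simply denotes $\sum_{v\in S}d^+(v)$ the identity is immediate.) Next I would invoke the definition of almost regular: by hypothesis, for every vertex $v\in V(T)$ we have $d^+(v)-d^-(v)\in\{+1,-1\}$, with the value $+1$ characterising $v\in V_T^+$ and $-1$ characterising $v\in V_T^-$.

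Combining these two ingredients gives $d^+(S)-d^-(S)=|S\cap V_T^+|-|S\cap V_T^-|$, and since $|S\cap V_T^+|+|S\cap V_T^-|=|S|$, the triangle-style bound $|d^+(S)-d^-(S)|\le |S|$ follows. For the equality case I would simply observe that $d^+(S)-d^-(S)=|S|$ forces $|S\cap V_T^-|=0$, i.e.\ $S\subseteq V_T^+$, and conversely if $S\subseteq V_T^+$ every term in the sum equals $+1$, yielding equality; the other equality case is symmetric.

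There is essentially no obstacle here: the observation is an immediate corollary of the fact that the out-minus-in degree function takes only the values $\pm 1$ on an almost regular tournament, together with linearity of summation. The only thing to be careful about is keeping the bookkeeping between $V_T^+$ and $V_T^-$ consistent with the sign convention, and making sure the identity $d^+(S)-d^-(S)=\sum_{v\in S}(d^+(v)-d^-(v))$ is applied with the same definition of $d^+(S)$ used elsewhere in the section.
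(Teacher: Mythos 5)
Your proof is correct and matches the paper's reasoning exactly: the paper derives the observation directly from the identity $d^+(S)-d^-(S)=\sum_{v\in S}(d^+(v)-d^-(v))$ stated in the sentence immediately before it, together with the fact that each summand is $+1$ on $V^+_T$ and $-1$ on $V^-_T$. Your handling of the equality cases and of the two possible readings of $d^+(S)$ is consistent with how the observation is used later in the paper.
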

	
Now we are ready to prove the first main result of this subsection.

	\begin{thm}\label{thm: ar}
		Let $D=(X, Y,A(D))$ be an oriented split graph, where $Y$ induces an almost regular tournament $T$ with $2d$ vertices. Then $D$ has a Sullivan vertex.
	\end{thm}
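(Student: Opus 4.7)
The plan is to argue by contradiction, extending the approach in the proof of Theorem \ref{thm:regular}. Assume $D$ has no Sullivan vertex. I will first establish almost-regular analogues of parts (A1) and (A2): for each $x \in X$, setting $A = N^-(x)$, $B = N^{--}_Y(x)$, and $C = Y \setminus (A \cup B)$, the same argument used in the regular case gives $A \to C$; counting arcs of $T$ crossing $C$ and applying Observation \ref{obs: d+S} then yields $d^{--}_Y(x) \ge d^-(x) - 1$ when $C \ne \emptyset$ and $d^{--}_Y(x) \ge d^+(x)$ when $C = \emptyset$, with the first inequality tight only if $C \subseteq V^-_T$ and $C \to B$ completely in $T$. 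The symmetric (A1)-type bound gives $d^{++}_Y(x) \ge d^+(x) - 1$, with tightness forcing the analogous $C_1(x) \subseteq V^+_T$ together with the corresponding complete domination.

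From no Sullivan vertex for $x$ we have $d^{++}(x) \le d^-(x) - 1$, which rules out the $C_1 = \emptyset$ case and yields $d^+(x) \le d^-(x)$. Combined with the (A2)-analog this gives $d^+(x) - d^{--}_Y(x) \le 1$, with equality only for $x$ in an exceptional set $X^* \subseteq X$ satisfying $d^+(x) = d^-(x)$ and the tight structural conditions from both (A1) and (A2) simultaneously. Summing the non-Sullivan condition over $y \in Y$, decomposing $d^{++}(y) = d^{++}_Y(y) + d^{++}_X(y)$ and $d^-(y) = d^-_T(y) + d^-_X(y)$, applying Lemma \ref{sums}, and using Proposition \ref{prop:1} (which gives $d^{++}_T(y) = d^-_T(y)$ when $y$ is a 2-king of $T$ and $d^{++}_T(y) = d^-_T(y) - 1$ otherwise), I obtain
\[
\sum_{x \in X}\bigl(d^+(x) - d^{--}_Y(x)\bigr)\ \ge\ 2d - |U|\ \ge\ d,
\]
where $U$ is the set of non-2-kings of $T$, all contained in $V^-_T$, so $|U| \le d$. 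Combined with the upper bound $\sum_{x \in X}(d^+(x) - d^{--}_Y(x)) \le |X^*|$, this forces $|X^*| \ge d$, and the proof thus reduces to showing $X^* = \emptyset$ whenever $d \ge 2$.

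The main difficulty is this final step. For $x \in X^*$ with the smallest admissible value $d^-(x) = (d+1)/2$ (only when $d$ is odd), the tight (A2)-case forces $C_2(x) = V^-_T$, $N^-(x) \subseteq V^+_T$, and $N^-(x) \to V^-_T$ completely; every vertex of $N^-(x)$ then has all of its $d$ out-neighbours in $V^-_T$ and is therefore a sink in $T[V^+_T]$. Since a tournament has at most one sink, $|N^-(x)| \le 1$, forcing $d \le 1$. I expect an analogous \emph{too-many-sinks} obstruction, exploiting the parallel tight chains $B_1 \to C_1 \to A_1$ from (A1) and $A_2 \to C_2 \to B_2$ from (A2) together with $V^+_T \subseteq A_2 \cup B_2$ and $V^-_T \subseteq A_1 \cup B_1$, to rule out the cases $d^-(x) > (d+1)/2$; this case analysis will be the most delicate part of the argument. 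Finally, the base case $d = 1$, where $T$ is a single arc $u \to v$, is handled by a short direct argument producing a 2-king of any source-free $D$ of this form.
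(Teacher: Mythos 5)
Your setup matches the paper's: argue by contradiction, derive $d^{++}_Y(x)\ge d^+(x)-1$ and $d^{--}_Y(x)\ge d^-(x)-1$ (or $\ge d^+(x)$ when the residual set is empty) from Observation \ref{obs: d+S}, conclude $d^+(x)=d^-(x)$ for the tight vertices, and then double-count via Lemma \ref{sums} and Proposition \ref{prop:1} to get $\sum_{x\in X}\bigl(d^+(x)-d^{--}_Y(x)\bigr)\ge 2d-|U|\ge d$ and hence $|X^*|\ge d$. All of that is correct and is exactly the paper's lower bound $|X_1|\ge d$.

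The gap is the other half of the contradiction. The paper does \emph{not} show $X_1=\emptyset$; it shows $|X_1|<d$, and this requires a substantial structural analysis of the tight vertices that your proposal does not contain. Specifically, the paper proves that for every $x\in X_1$ the sets $C_x=N^-(x)\setminus N^{++}(x)$ and $C'_x=N^+(x)\setminus N^{--}(x)$ satisfy $N^{++}_Y(x)\to C_x\to N^+(x)$ and $N^-(x)\to C'_x\to N^{--}_Y(x)$ with $C_x\subseteq V^+_T$, that $N^{++}_X(x)=\emptyset$, and — crucially — that \emph{all} vertices of $X_1$ have identical in- and out-neighbourhoods, hence a common set $C_x$ with $C_x\to X_1$. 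Picking $u\in C_x$ and $v\in N^-_X(u)\setminus N^{++}(u)$ (which exists because $u$ is a $2$-king of $T$ but not a Sullivan vertex) then gives $X_1\subseteq N^{++}(v)$ and $d^-(v)\le 2d-d^+_Y(u)\le d$, whence $|X_1|<d$. Your replacement for this — proving $X^*=\emptyset$ outright via a ``too many sinks in $T[V^+_T]$'' obstruction — is only verified in the single extreme subcase $d^-(x)=(d+1)/2$, where $C'_x$ happens to equal all of $V^-_T$; for $d^-(x)>(d+1)/2$ the set $C'_x$ is a proper subset of $V^-_T$, the vertices of $N^-(x)$ are only forced to dominate $C'_x$ rather than all of $V^-_T$, and no sink conclusion follows. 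You explicitly defer this case analysis, but it is the heart of the proof, it is a purely local argument about one $x\in X^*$ whereas the paper's bound genuinely needs the global interaction between different vertices of $X_1$ (property that they share neighbourhoods) and a vertex $v\in X$ outside $X_1$. As it stands the argument is incomplete, and the proposed route to closing it is not supported.
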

	\begin{proof}
		Suppose to the contrary that there is no Sullivan vertex, i.e., for any $u\in V(D), d^{++}(u)<d^{-}(u)$ and in particular $d^-(u)>0$. For any vertex $x\in X$, we may also assume that $d^+(x)>0$ as otherwise, since $x$ is not contained in the in-neighbourhood of any other vertex, the resulting digraph obtained by deleting $x$ still has no Sullivan vertex and then we can consider this digraph instead of $D$.
		
		For any $x\in X$, since $x$ is not a Sullivan vertex, we must have 
		\begin{equation}\label{eq1}
			d^{++}_Y(x)\leq d^{++}_Y(x)+d^{++}_X(x)\leq d^{-}(x)-1,
		\end{equation}
		therefore, if $d^{++}_Y(x)=d^{-}(x)-1$ then $N_X^{++}(x)=\emptyset$.
		
		Let $A_x=N^{+}(x)$, $B_x=N_Y^{++}(x)$ and $C_x=Y-A_x-B_x$. Observe that $C_x\not= \emptyset$ (as otherwise $N^-(x)\subseteq N_Y^{++}(x)$) 
		and $C_x\to A_x$. By Observation \ref{obs: d+S}, we have 
		
		\begin{equation*}
			|A_x||C_x|\leq d_Y^+(C_x)\leq d_Y^-(C_x)+|C_x|\leq |B_x||C_x|+|C_x|,
		\end{equation*}
		which implies
		\begin{equation}\label{eq2}
			d^{++}_Y(x)\geq d^{+}(x)-1.
		\end{equation}
	Equality in (\ref{eq2}) holds if and only if $d_Y^-(C_x)=|B_x||C_x|$ (or equivalently $B_x\to C_x$) and $C_x\subseteq V^+_T$. 
		These two conditions imply that for any vertex $v\in C_x$, $d^+_{D[C_x]}(v)=d-|A_x|$ and therefore $D[C_x]$ is regular. Also recall that $C_x\to N^+(x).$

		Let $C_x'=Y-N^{-}(x)-N_Y^{--}(x)$. If $C_x'=\emptyset$, we have that $N^{+}(x) \subseteq N_Y^{--}(x)$ and thus
		\begin{equation}\label{eq3}
			d^{--}_Y(x)\geq d^{+}(x).
		\end{equation}
		Otherwise, by a similar argument to the one for (\ref{eq2}), we can obtain
		\begin{equation}\label{eq4}
			d^{--}_Y(x)\geq d^{-}(x)-1,
		\end{equation}
		with equality if and only if $C_x' \to N_Y^{--}(x)$ and $C_x'\subseteq V^-_T$. As for (\ref{eq2}), we have that $D[C'_x]$ is regular and  $N^{-}(x) \to C_x'$.
		
		Combining \eqref{eq1}, \eqref{eq2} and \eqref{eq4}, we have
		\begin{equation}\label{eq7}
			d^{--}_Y(x)\geq d^{+}(x)-1,
		\end{equation}
		with equality if and only if equalities in \eqref{eq1}, \eqref{eq2} and \eqref{eq4} hold. Note that if $x$ attains equality in \eqref{eq1} and \eqref{eq2} then 
		\begin{equation}\label{eq5}
			d^+(x)=d^-(x).
		\end{equation}
		Let $X_1=\{x\in X: d^{--}_Y(x)= d^{+}(x)-1\}$. By (\ref{eq3}) and (\ref{eq7}), $X\setminus X_1=\{x\in X: d^{--}_Y(x)\geq d^{+}(x)\}$. If $X_1\not=\emptyset$, we will show the following properties for each $x\in X_1$.
		\begin{description}
			\item[(A0)] $d^+(x)=d^-(x)$.
			
			\item[(A1)]$N_X^{++}(x)=\emptyset$ and therefore $N^{+}(x)\cap N^{-}(y)=\emptyset$ for any $y\in X$.
			\item[(A2)] $D[C_x]$ is regular, $C_x\subseteq V^+_T$ and $N_Y^{++}(x) \to C_x \to N^{+}(x)$.
			\item[(A3)]$D[C_x']$ is regular, $C'_x\subseteq V^-_T$ and $N^{-}(x) \to C_x' \to N_Y^{--}(x)$.
			\item[(A4)]$C_x\subseteq N^{-}(x)$ and $C_x'\subseteq N^{+}(x)$, i.e., $C_x=N^{-}(x)\setminus N^{++}(x)$ and $C'_x= N^{+}(x)\setminus N^{--}(x)$.
			\item[(A5)]For any $x, y\in X_1$, $N^+(x)=N^+(y)$ and $N^-(x)=N^-(y)$. 
		\end{description}
		Properties (A0)--(A3) follow immediately from the equality conditions of  \eqref{eq1}, \eqref{eq2} and \eqref{eq4} and their implications.
		
\vspace{2mm}	
	
		{\bf Proof of (A4): }
		We first prove $C_x\subseteq N^-(x)$ by showing $C_x\cap (N_Y^{--}(x)\cup C_x')=\emptyset$. If there exists $u\in C_x\cap N_Y^{--}(x)$, then by (A2) and (A3), we have that $C_x'\to u \to N^{+}(x)$, which implies $C_x'\cap N^{+}(x)=\emptyset$. Since $C_x'\cap N^{+}(x)=\emptyset$ and $N^{-}(x)\cap N^{+}(x)=\emptyset$, we have $N^{+}(x) \subseteq N_Y^{--}(x)$, which contradicts the fact that $d^{--}_Y(x)= d^{+}(x)-1$. Thus, we have $C_x\cap N_Y^{--}(x)=\emptyset$. 
Properties	(A2), (A3) and the fact that $V^+_T\cap V^-_T=\emptyset$ imply that $C_x\cap C_x'=\emptyset$. Property $C_x'\subseteq N^+(x)$ can be proved analogously. Thus, (A4) is proved.
		
		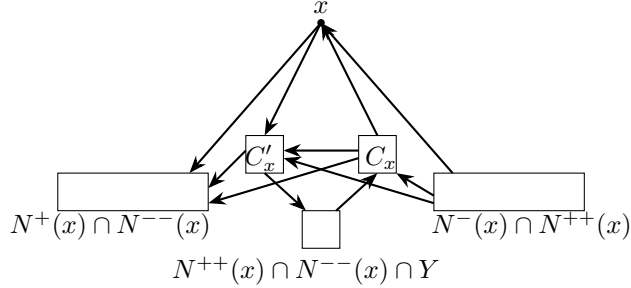
\begin{figure}[htbp]
			\begin{center}
				\begin{tikzpicture}[scale=1]
					\node[fill=black,circle,inner sep=1pt] (t1) at (0,1) {};
					
					\draw (-1,-0.5)--(-0.5,-0.5)--(-0.5,-1)--(-1,-1)--cycle;
					\draw (1,-0.5)--(0.5,-0.5)--(0.5,-1)--(1,-1)--cycle;
					\draw [thick, arrows = {-Stealth[reversed, reversed]}] (0.5,-0.7)--(-0.5,-0.7);
					\draw [thick, arrows = {-Stealth[reversed, reversed]}] (0.5,-0.8)--(-1.5,-1.4);
					\draw [thick, arrows = {-Stealth[reversed, reversed]}] (1.5,-1.3)--(1,-1);
					\draw [thick, arrows = {-Stealth[reversed, reversed]}] (1.5,-1.4)--(-0.5,-0.8);
					\draw [thick, arrows = {-Stealth[reversed, reversed]}] (-1,-0.7)--(-1.5,-1.2);
					
					\draw (-.25,-1.5)--(0.25,-1.5)--(0.25,-2)--(-.25,-2)--cycle;
					\draw (-3.5,-1)--(-1.5,-1)--(-1.5,-1.5)--(-3.5,-1.5)--cycle;
					\draw (3.5,-1)--(1.5,-1)--(1.5,-1.5)--(3.5,-1.5)--cycle;
					
					\draw [thick, arrows = {-Stealth[reversed, reversed]}] (0,1)--(-0.75,-0.5);
					\draw [thick, arrows = {-Stealth[reversed, reversed]}] (0.75,-0.5)--(0,1);
					\draw [thick, arrows = {-Stealth[reversed, reversed]}] (0,1)--(-1.75,-1);
					\draw [thick, arrows = {-Stealth[reversed, reversed]}] (1.75,-1)--(0,1);
					
					\draw [thick, arrows = {-Stealth[reversed, reversed]}] (-.75,-1)--(-.2,-1.5);
					\draw [thick, arrows = {-Stealth[reversed, reversed]}] (.2,-1.5)--(.75,-1);

					\node at (0,1.2) {$x$};
					\node at (-0.8,-0.8) {$C_x'$};
					\node at (0.8,-0.8) {$C_x$};
					\node at (-2.8,-1.7) {$N^{+}(x)\cap N^{--}(x)$};
					\node at (2.8,-1.7) {$N^{-}(x)\cap N^{++}(x)$};
					\node at (-0.2, -2.3) {$N^{++}(x)\cap N^{--}(x)\cap Y$};
				\end{tikzpicture}
				\caption{The graph $D[\{x\}\cup Y]$ with $x\in X_1$.}\label{fig:2}
			\end{center}
		\end{figure}
		By (A0)--(A4), we have that for any $x\in X_1$, $Y$ can be partitioned into five disjoint sets, say $C_x=N^{-}(x)\setminus N^{++}(x), C_x'=N^{+}(x)\setminus N^{--}(x), N^{-}(x)\cap N^{++}(x)$, $N^{+}(x)\cap N^{--}(x)$, $N^{++}(x)\cap N^{--}(x)\cap Y$, see Fig. \ref{fig:2}. 
		
\vspace{2mm}
		
		{\bf Proof of (A5): } We only need to show that for any pair of vertices $x,y\in X_1$  we have that $N^+(y)\subseteq N^+(x)$ ($N^-(y)\subseteq N^-(x)$ can be proved by a similar argument). Suppose that $N^+(y)\not\subseteq N^+(x)$, then there exists an out-neighbour of $y$, $z\in N_Y^{++}(x)\cup C_x$. Note that $N^+(y)\cap C_x=\emptyset$ since if not and $w\in N^{+}(y)\cap C_x$, then by (A4), $y\to w\to x$, a contradiction to (A1). 
		Thus, $z\in N_Y^{++}(x)$. By (A2), $y\to z\to C_x$. In addition, since $N^+(y)\cap C_x=\emptyset$, $C_x\subseteq N_Y^{++}(y)$. Again by (A2), $N_Y^{++}(y)\to C_y$ and therefore $C_x\to C_y$. In particular, $C_x\cap C_y=\emptyset$. But, by (A2), $N_Y^{++}(x) \to C_x \to N^{+}(x)$. Thus, $C_y\subseteq N^+(x)$. By (A4), $x\to C_y\to y$ which means $y\in N_X^{++}(x)$, a contradiction to (A1). This completes the proof of (A5).
		
\vspace{2mm}

		From (A5) we can see that all vertices in $X_1$ have the same neighbourhood. As a result, they have the same set $C_x$. In particular, by (A4), we can see that for any $x\in X_1$, $C_x\to X_1$.
		
		
		
		
		Let $u\in C_x$ and $v\in N_X^{-}(u)\setminus N^{++}(u)$ (the existence of $v$ is guaranteed by the fact that $u$ is not a Sullivan vertex). By (A4), $u\to X_1$ and therefore $X_1\subseteq N^{++}(v)$. In particular, $|X_1|\leq d^{++}(v)$. Since $v$ is not a Sullivan vertex, we have $d^{++}(v)< d^-(v)$. As $v\notin N^{++}(u)$, we have $N^-(v)\cap N^+_Y(u)=\emptyset$ which implies $d^-(v)\leq 2d-d^+_Y(u)\leq d$.
		Combining these inequalities, we have
		\[
		|X_1|<d.
		\]
		
		Recall that $X_1=\{x\in X: d^{--}_Y(x)= d^{+}(x)-1\}$ and $X\setminus X_1=\{x\in X: d^{--}_Y(x)\geq d^{+}(x)\}$.  By Lemma \ref{sums}, $\sum_{x\in X}(d_Y^{--}(x)-d_Y^{+}(x))=\sum_{y\in Y}(d_X^{++}(y)-d_X^{-}(y))$. Thus, we have
		\[
		\sum_{y\in Y}(d_X^{++}(y)-d_X^{-}(y))\geq \sum_{x\in X_1}(d_Y^{--}(x)-d^{+}(x))= -|X_1|.
		\]
		Recall that any vertex in $Y$ is not a Sullivan vertex and $T$ is almost regular, so by Proposition \ref{prop:1}, we have
		\[
		-1\geq d^{++}(y)-d^{-}(y)=\left \{\begin{array}{cc}
			d_X^{++}(y)-d_X^{-}(y)  &  \hbox{if $y$ is a 2-king of $T$;}\\
			d_X^{++}(y)-d_X^{-}(y)-1 & \hbox{otherwise.}
		\end{array}
		\right.
		\]
	Now using the facts that every vertex of $V^+_T$ is a 2-king and $|V^+_T|=d$, 
	we have that 
		\[
		-2d\geq -d+\sum_{y\in Y}(d^{++}_X(y)-d^{-}_X(y))\geq -d-|X_1|, 
		\]
		a contradiction to the fact that $|X_1|<d$. Hence we are done.\end{proof}
	
	Applying Proposition \ref{prop:1}, we can obtain the following similar result.
	\begin{prop}\label{prop:2}
		Let $T$ be an almost regular tournament of order $2d$. For any $v\in V(T)$, we have that 
		\[
		d^{++}(v)-d^{+}(v)=\left \{\begin{array}{cc}
			1  &  \hbox{if $v$ is a 2-king and $d^-(v)=d$,}\\
			-1  &  \hbox{if $d^+(v)=d,$}\\
			0  & \hbox{otherwise.}
		\end{array}
		\right.
		\]
	\end{prop}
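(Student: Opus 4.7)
The plan is to deduce Proposition \ref{prop:2} directly from Proposition \ref{prop:1} together with the fact that in an almost regular tournament of order $2d$ every vertex satisfies either $(d^+(v),d^-(v))=(d,d-1)$ (when $v\in V^+_T$) or $(d^+(v),d^-(v))=(d-1,d)$ (when $v\in V^-_T$), and that these two cases are disjoint. So the target identity is just an arithmetic rewriting of Proposition \ref{prop:1} once we pin down whether $v$ belongs to $V^+_T$ or $V^-_T$.

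First I would record the trivial observation that in any tournament $N^{++}(v)\subseteq N^-(v)$ (since every vertex other than $v$ is in $N^+(v)\cup N^-(v)$ and $N^{++}(v)$ is disjoint from $N^+(v)$ by definition). Combined with Proposition \ref{prop:1}, this gives $d^{++}(v)=d^-(v)$ when $v$ is a 2-king and $d^{++}(v)=d^-(v)-1$ otherwise. A further ingredient, already implicit in the proof of Proposition \ref{prop:1}, is that any vertex $v$ which is not a 2-king must lie in $V^-_T$: indeed, the argument there produced a vertex $u\in N^-(v)\setminus N^{++}(v)$ with $d^+(u)\geq d^+(v)+1$, forcing $u\in V^+_T$, and the same argument forces $v\in V^-_T$. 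Equivalently, every vertex of $V^+_T$ is automatically a 2-king.

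With these facts, the proof reduces to three disjoint cases. If $v\in V^-_T$ is a 2-king, then $d^-(v)=d$, $d^+(v)=d-1$, and $d^{++}(v)=d^-(v)=d$, giving $d^{++}(v)-d^+(v)=1$, which matches the first case of the statement. If $v\in V^+_T$, then $v$ is a 2-king, $d^+(v)=d$, $d^-(v)=d-1$, and $d^{++}(v)=d^-(v)=d-1$, giving $d^{++}(v)-d^+(v)=-1$, matching the second case. Finally, if $v\in V^-_T$ is not a 2-king, then $d^+(v)=d-1$, $d^-(v)=d$, and Proposition \ref{prop:1} gives $d^{++}(v)=d-1$, so $d^{++}(v)-d^+(v)=0$, matching the ``otherwise'' case.

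I do not anticipate any real obstacle: the only subtle point is to justify cleanly that the three cases stated (``2-king with $d^-(v)=d$'', ``$d^+(v)=d$'', and ``otherwise'') really partition $V(T)$, which follows from the implication ``$v$ not a 2-king $\Rightarrow v\in V^-_T$'' already established in Proposition \ref{prop:1}. Writing that implication out explicitly (rather than merely citing it) is the only thing I would be careful about.
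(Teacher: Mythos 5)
Your proposal is correct and matches the paper's intent exactly: the paper offers no written proof, merely noting that Proposition \ref{prop:2} follows by "applying Proposition \ref{prop:1}," and your derivation — combining Proposition \ref{prop:1} with the fact that every non-2-king lies in $V^-_T$ (so every vertex with $d^+(v)=d$ is a 2-king) and then checking the three disjoint cases arithmetically — is precisely that application, carried out in full.
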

	\begin{thm}\label{thm: ars}
	Let $D=(X, Y,A(D))$ be an oriented split graph, where $Y$ induces an almost regular tournament $T$ with $2d$ vertices. Then $D$ has a Seymour vertex.
	\end{thm}
	\begin{proof}
		Suppose to the contrary that there is no Seymour vertex. Thus, for any vertex $x\in X$, $d_X^{++}(x)+d_Y^{++}(x)< d^+(x)$. In particular, $d^+(x)>0$. We may also assume that $d^-(x)>0$ for otherwise $x$ is not contained in the first or second out-neighbourhood of any other vertex, and therefore we can delete it and consider the resulting split digraph. Let $A_x=N^{+}(x)$, $B_x=N_Y^{++}(x)$ and $C_x=Y-A_x-B_x$. One can observe that if $C_x\not= \emptyset$, then $C_x\to A_x$. First we claim that $C_x= \emptyset$. Otherwise, by Observation \ref{obs: d+S}, we have 
		
		\begin{equation*}
			|A_x||C_x|\leq d_Y^+(C_x)\leq d_Y^-(C_x)+|C_x|\leq |B_x||C_x|+|C_x|,
		\end{equation*}
		which implies
		\begin{equation}\label{c0eq1}
			d^{++}_Y(x)\geq d^{+}(x)-1.
		\end{equation}
		We have $d_Y^{++}(x)=d^+(x)-1$ since $x$ is not a Seymour vertex, then all the vertices in $C_x$ with out-degree $d$. Note that $N_X^+(A_x)=\emptyset$, otherwise, $x$ is a Seymour vertex. Choose a vertex $y$ of out-degree $d-1$ in $A_x$ (the number of vertices with out-degree $d-1$ guarantees the existence of such $y$), we have $y$ is a Seymour vertex by the arguments above. 
		
		So in the following we always assume that for any $x\in X$, $C_x=\emptyset$. As a result, for any $x\in X$, since $x$ is not a Seymour vertex and $C_x=\emptyset$, $d^+(x)>d^{++}_Y(x)=2d-d^+(x)$ and therefore $d^+(x)>d$.
		
		Note that $N^-(x)\subseteq N_Y^{++}(x)$, then we have $d^-(x)\leq d_Y^{++}(x)$. Thus,
		\begin{equation}\label{c2eq1}
			d_X^{++}(x)< d^+(x)-d_Y^{++}(x)\leq d^+(x)-d^{-}(x)\leq 2d-2d^-(x).
		\end{equation}
		Now, we consider a vertex in $y\in Y$. Since $y$ is not a Seymour vertex, $d^+_X(y)+d^+_Y(y)=d^+(y)>d^{++}(y)=d^{++}_X(y)+d^{++}_Y(y)$ which implies
		\begin{equation}\label{c2eq2}
			d^+_X(y)-d^{++}_X(y)> d^{++}_Y(y)-d^+_Y(y)\geq -1, 
		\end{equation}
		where the last inequality follows from Proposition \ref{prop:2}.
		
		Let $C_x'=Y-N_D^{-}(x)-N_D^{--}(x)$. We now partition $X$ into two sets $X_a:=\{x\in X: C_{x}'=\emptyset\}$ and $X_b:=X\setminus X_a$. For any $x\in X_a$, since $C_x'=\emptyset$, $N^+(x)\subseteq N^{--}_Y(x)$ which implies 
		\begin{equation}\label{c1eq1}
			d^-(x)< d<d^+(x)\leq d^{--}_Y(x). 
		\end{equation}
		
		By arguments similar to those for  (\ref{c0eq1}), for any $x\in X_b$, we have
		\begin{equation}\label{c2eq3}
			d^{--}_Y(x)\geq d^{-}_Y(x)-1,
		\end{equation}
		with equality if and only if $N^{-}(x) \to C_x' \to N_Y^{--}(x)$, all vertices in $C_x'$ are of in-degree $d$ and $D[C'_x]$ is regular.

		\par Let $X_1:=\{x\in X_b: d^{--}_Y(x)=d^-_Y(x)-1\}$, then $X-X_1=\{x\in X: d^{--}_Y(x)\geq d^-_Y(x)\}$. We want to get a lower and an upper bound for $|X_1|$ in order to achieve a contradiction. We first try to get an upper bound. Now, let $x^*\in X_1$ be a vertex with the minimum in-degree in $X_1$, we partition $X_1$ into two sets $X_{11}:=\{x\in X_1: x\in N^{++}(x^*)\cap X\}$ and $X_{12}:=X_1\setminus X_{11}$. By (\ref{c2eq1}), we have
		\begin{equation}\label{c2eq5}
			|X_{11}|\leq d^{++}_X(x^*)<2d-2d^-(x^*)=|C_{x^*}'|-1.
		\end{equation}
		We now claim that there exists a vertex $u\in N^{--}(x^*)\cap N^+(x^*)$ such that $X_{12}\subseteq N_X^{++}(u)$ (we postpone its proof to the end of the proof of the theorem in order not to break the flow of the proof). Therefore, by (\ref{c2eq2}) we have
		\[|X_{12}|\leq d^{++}_X(u)\leq d^+_X(u)\leq d^{++}_X(x^*),\]
		where the last inequality holds because of $N^+_X(u)\subseteq N^{++}_X(x^*)$. Again applying (\ref{c2eq1}), we can obtain
		\begin{equation}\label{c2eq6}
			|X_{12}|\leq |C_{x^*}'|-1.
		\end{equation}
		By (\ref{c2eq5}) and (\ref{c2eq6}), we have 
		\begin{equation}\label{c2eq7}
			|X_1|=|X_{11}|+|X_{12}|\leq 2|C_{x^*}'|-2.
		\end{equation}

		Now, we try to get the lower bound of $|X_1|$. Let $t$ be the number of vertices in $Y$ which are  2-kings of $T$ with out-degree $d-1$ in $T$. Using Proposition \ref{prop:2}, the assumption that all vertices are not Seymour vertex and Lemma \ref{sums}, we obtain
		\begin{eqnarray*}
			-2d&\geq&\sum_{y \in Y} \left( d^{++}(y)-d^{+}(y) \right)\\ & = & \sum_{y \in Y} \left( d^{++}_Y(y)-d^{+}_Y(y) \right) 
			+ \sum_{y \in Y} \left(d^{++}_X(y)-d^{+}_X(y)\right) \\ \vspace{0.2cm}
			&\AZ{ \geq} &  \AZ{-d+t +  \sum_{x \in X} \left(d^{--}_Y(x)-d^{-}(x) \right)} \\ 
			& \geq &  -d+t-|X_1|,
		\end{eqnarray*}
		i.e., $|X_1|\geq t+d$. On the other hand, we claim that for any $x_1\in X_1$, all vertices in $C_{x_1}'$ are 2-kings of $T$ which implies $t\geq |C_{x_1}'|$ and therefore,
		\begin{equation}\label{c2eq4}
			|X_1|\geq |C_{x_1}'|+d.
		\end{equation}
		In fact, suppose there is a vertex $y'\in C_{x_1}'$ which is not a 2-king of $T$. Note that $y'$ is a 2-king in $D[C_{x_1}']$ since it is regular. Then, because $y'\to N^{--}_Y(x_1)$, there exists a $z\in N^-_Y(x_1)$ which dominates all vertices in $N^{--}_Y(x_1)$. Thus, since $N^-_Y(x_1)\to C_{x_1}'$, $d\geq d^+_Y(z)\geq d^{--}_Y(x_1)+|C_{x_1}'|$ and therefore $d^-(x_1)\geq d$ which contradicts the fact that $d^+(x_1)>d$.
		
		By \eqref{c2eq7} and \eqref{c2eq4}, we have \[
		|C_{x^*}'|+d\leq |X_1|\leq 2|C_{x^*}'|-2,
		\]
		or equivalently $|C_{x^*}'|\geq d+2$. Recall that $N^{-}(x^*)\to C_{x^*}'$, we have that for any $w\in N^{-}(x^*)$, $d^+_Y(w)\geq |C_{x^*}'|\geq d+2$, a contradiction to the fact that $d$ is the maximum out-degree of $T$.

		Now, we end the proof by verifying the following claim.
		\begin{claim}\label{claim1}
			There exists a vertex $u\in N^{--}(x^*)\cap N^+(x^*)$ such that $X_{12}\subseteq N_X^{++}(u)$.
		\end{claim}
		{\bf The proof of Claim \ref{claim1}.} Recall that for all $x\in X$, $d^+(x)>d$. Let $B^*:=N^{--}(x^*)\cap N^+(x^*)$. We first show $B^*\neq \emptyset$. If $B^*= \emptyset$, then $N^+(x^*)\subseteq C_{x^*}'$ which is impossible since $d^+(x^*)>d$ and $|C_{x^*}'|\leq d$. Now we let $u$ be the vertex with the minimum out-degree in $D[B^*]$, then \[|N_Y^+(u)\setminus N^{+}(x^*)|=d^+_Y(u)-d^+_{D[B^*]}(u)\geq d-1- \frac{|B^*|-1}{2}.\] Now we show that $u$ is the required vertex. 
		\par Suppose it is not, then there exists a vertex $x'\in X_{12}\setminus N^{++}(u)$. In particular, $x'\not\in N^{++}(u)\cup N^{++}(x^*)$, i.e., $N^{-}(x')\cap (N^{+}(u)\cup N^{+}(x^*))=\emptyset$.
		Thus we have 
		\[
		d^-(x')<2d-d^+(x^*)-|(N^{+}(u)\cap Y)\setminus N^{+}(x^*)|< \frac{|B^*|+1}{2}.
		\]
		But by the minimality of $d^-(x^*)$,
		$$d^-(x')\geq d^{-}(x^*)= d^{--}_Y(x^*)+1> |B^*|,$$ 
		a contradiction. This completes the proof.
	\end{proof}
	
	\section{Discussion}\label{sec:disc}
	
	We proved Seymour's and Sullivan's conjectures for special classes of oriented graphs. Our results and those of other authors show that both conjectures are very difficult despite their simple formulations. 
	In particular, we were unable to verify either conjecture for all oriented split graphs. As oriented split graphs are a subfamily of multipartite tournaments, which are orientations of complete multipartite graphs, also verifying that the conjectures hold for multipartite tournaments remains an open problem. 
	
	\paragraph{Acknowledgement} This version of the paper corrects a mistake in the proof of Corollary \ref{cor:t} in the previous version of the paper which was pointed to us
	by Emanuele Natale and {\'E}douard Oyallon on July10, 2026.

	\appendix
	
        {\bf \huge Appendix} 
	
	\section{Proof of Proposition \ref{almost}}
	
	For a fixed $u\in V(D)$ and any $v\in V(D)\setminus\{u\}$, the probability of $v\notin N^+(u)\cup N^{++}(u)$ is 
	$(1-\frac{p}{2})(1-\frac{p^2}{4})^{n-2}$. Let $X_u$ be the random variable of the number of vertices that are not in $N^+(u)\cup N^{++}(u)$. Then, 
	\[\mathbb{E}(X_u)=(n-1)(1-\frac{p}{2})(1-\frac{p^2}{4})^{n-2}.\]
	By Markov's inequality,
	\[\mathbb{P}(X_u\geq 1)\leq\mathbb{E}(X_u)=(n-1)(1-\frac{p}{2})(1-\frac{p^2}{4})^{n-2}\to 0~ (as~n\to \infty),\]
	which completes the proof.
	
\end{document}